\newtheoremstyle{exampstyle}
  {\topsep} 
  {\topsep} 
  {\itshape} 
  {} 
  {\bfseries} 
  {.} 
  {.5em} 
  {} 
\theoremstyle{exampstyle}
\numberwithin{equation}{section}
\newtheorem{theorem}{Theorem}
\newtheorem{lemma}{Lemma}[section]
\newtheorem{corollary}[lemma]{Corollary}
\newtheorem{example}{Example}
\let\oldref\ref
\renewcommand{\ref}[1]{(\oldref{#1})}  
\renewcommand{\eqref}[1]{(\oldref{#1})}
\newcommand{\ds}{\displaystyle}
\newbox\boxaddrone \newbox\boxaddrtwo
\newcommand{\N}{\mathcal{N}}
\newcommand{\ba}{\begin{eqnarray*}}
\newcommand{\ea}{\end{eqnarray*}}
\newcommand{\be}{\begin{equation}}
\newcommand{\ee}{\end{equation}}
\newcommand{\bea}{\begin{eqnarray}}
\newcommand{\eea}{\end{eqnarray}}
\newcommand{\p}{\partial}
\newcommand{\om}{\Omega}
\newbox\boxaddrone \newbox\boxaddrtwo
\def\N+{n\in\mathbb{N}^{+}}
\def\1d{\mathcal{D}((-\Delta)^{\gamma_1+1/2})}
\def\2d{\mathcal{D}((-\Delta)^{\gamma_2+1})}
\begin{document}
\title{Inverse source problem of sub-diffusion of variable exponent}
\author[1]{Zhiyuan Li}
\author[2,3]{Chunlong Sun\thanks{Author to whom any correspondence should be addressed, sunchunlong@nuaa.edu.cn}}
\author[4]{Xiangcheng Zheng}
\affil[1]{School of Mathematics and Statistics, Ningbo University, Ningbo 315211, Zhejiang, China}
\affil[2]{School of Mathematics, Nanjing University of Aeronautics and Astronautics, Nanjing 211106, Jiangsu, China}
\affil[3]{Nanjing Center for Applied Mathematics, Nanjing 211135, Jiangsu, China}
\affil[4]{School of Mathematics, Shandong University, Jinan 250100, Shandong, China}

\maketitle

\begin{abstract}
This work investigates both direct and inverse problems of the variable-exponent sub-diffusion model, which attracts increasing attentions in both practical applications and theoretical aspects. Based on the perturbation method, which transfers the original model to an equivalent but more tractable form, the analytical extensibility of the solutions and the weak unique continuation principle are proved, which results in the uniqueness of the inverse space-dependent source problem from local internal observation. Then, based on the variational identity connecting the inversion input data with the unknown source function, we propose a weak norm and prove the conditional stability for the inverse problem in this norm. The iterative thresholding algorithm and Nesterov iteration scheme are employed to numerically reconstruct the smooth and non-smooth sources, respectively. Numerical experiments are performed to investigate their effectiveness. \\

\noindent Keywords: inverse source problem, variable-exponent fractional equation, weak unique continuation, uniqueness.\\

\noindent AMS Subject Classifications: 35R11, 35R30, 65M32.
\end{abstract}

\section{\large Introduction}
Sub-diffusion model has been widely used in various fields such as anomalous diffusion \cite{DenHou}, where the exponent of the power function kernel in the sub-diffusion model is determined by the sublinear growth of the mean square displacement (MSD) of the particle with respect to time $t$ \cite{Che}. In \cite{SunChe} and \cite{SunChe2009}, the best-fitting approach based on MSD data on calcium ion diffusion in dendrites is applied to show that the sublinear growth rate is a linear function of $t$, resulting in the variable-exponent sub-diffusion model. Extensive applications of the variable-exponent sub-diffusion could be found in the review literature \cite{SunCha}. Mathematically, the work \cite{ZheLi} employs the variable exponent to perform a local modification of the sub-diffusion model. Such modification resolves the incompatibility between the nonlocal operators in subdiffusion and the local initial conditions, and thus eliminates the initial singularity of the solutions of the sub-diffusion, while retaining
its heavy-tail behavior away from the initial time. Consequently, the variable-exponent sub-diffusion model exhibits great potential in both practical applications and theoretical aspects.

Based on the above discussions, we consider the variable-exponent sub-diffusion equation \cite{SunChe,SunCha}
\begin{equation}\label{Prob-IBVP}
\begin{cases}
^c\p_t^{\alpha(t)} u(x,t)-  \Delta u (x,t)= F(x,t), & (x,t)\in\Omega_T:=\Omega\times(0,T),\\
u(x,0) = u_0(x),& x\in\Omega,\\
u(x,t) = 0, & (x,t)\in\partial\Omega\times(0,T).
\end{cases}
\end{equation}
Here $\Omega \subset \mathbb{R}^d$ with $1 \le d \le 3$ is a simply-connected bounded domain with the piecewise smooth boundary $\p \om$ with convex corners, $0<T<\infty$ is the terminal time, $x := (x_1,\cdots,x_d)$ denotes the spatial variables, $F$ and $u_0$ refer to the source term and the initial value, respectively, and the fractional derivative of order $0<\alpha(t)<1$ is defined via the variable-exponent Abel kernel $k$ \cite{LorHar}
\begin{equation*}
^c\p_t^{\alpha(t)}u :=k*\p_t u,~~k(t):=\frac{t^{-\alpha(t)}}{\Gamma(1-\alpha(t))} .
\end{equation*}

There exist sophisticated results on direct and inverse problems for sub-diffusion of constant exponent \cite{JinKia,Kal,KiaLiu,SakYam,Slo,ZhaZho} and spatially dependent variable exponent \cite{Hon,KiaSocYam,LeSty}. For the time-dependent case like (\ref{Prob-IBVP}), \cite{KiaSlo,Uma} focus on the piecewise constant variable exponent such that the solution representation is available for each temporal piece.  For the case where $\alpha(t)$ is smooth, two methods (i.e. the convolution method and the perturbation method) have been proposed in \cite{Zhe} to convert (\ref{Prob-IBVP}) into more tractable forms such that the well-posedness of (\ref{Prob-IBVP}) and its regularity of the solution could be analyzed. For inverse problems of (\ref{Prob-IBVP}), there exist rare studies and a recent result is the determination of $\alpha_0:=\alpha(0)$ from the solution data, where $\alpha_0$ is the initial value of $\alpha(t)$ that plays the key role in characterizing the properties of the solution, see \cite{Zhe}. Thus, both direct and inverse problems of (\ref{Prob-IBVP}) are far from well developed, which motivates the current study.

This work investigates both direct and inverse problems of the variable-exponent sub-diffusion model (\ref{Prob-IBVP}). Based on the perturbation method proposed in \cite{Zhe}, which transfers (\ref{Prob-IBVP}) to an equivalent but more tractable form, subtle estimates over a complex plane are made to show the analytical extensibility of the solutions from a finite time interval to an infinite time interval. Then the weak unique continuation principle is proved based on the Laplace transform method, which results in the uniqueness of the inverse space-dependent source problem from either local internal observation or partial boundary observation. Furthermore, based on the variational identity connecting the inversion input data with the unknown source function, we propose a weak norm and prove the conditional stability for the inverse problem in this norm. Finally, the iterative thresholding algorithm and Nesterov iteration scheme are used to numerically recover the source function, and numerical experiments are carried out to substantiate its effectiveness. 

The rest of this work is organized as follows. In Section \ref{sec2}, we prove the analytical extensibility and the weak unique continuation principle. In Section \ref{sec3} we prove the uniqueness and conditional stability of the inverse source problem. In Section \ref{sec4} we propose numerical methods and examples for reconstructing the source function.

\section{\large Well-posedness and weak unique continuation property for forward problem}\label{sec2}
\subsection{Notations}
For subsequent analysis, we first recall some standard results for parabolic equations. Let $L^p(\Omega)$ with $1 \le p \le \infty$ be the Banach space of $p$th power Lebesgue integrable functions on $\om$. Denote the inner product of $L^2(\Omega)$ by $(\cdot,\cdot)$. For a positive integer $m$,
let $ W^{m, p}(\Omega)$ be the Sobolev space of $L^p$ functions with weakly derivatives of $m$ in $L^p(\Omega)$. Let $H^m(\Omega) := W^{m,2}(\Omega)$ and $H^m_0(\Omega)$ be its subspace with the zero boundary condition up to the order $m-1$. For $\frac{1}{2}<s\leq 1$, $H^s_0(\Omega)$ is defined by interpolating $L^2(\Omega)$ and $H^1_0(\Omega)$ \cite{AdaFou}. 

For a Banach space $X$, we say that $f\in L^p(0,T;X)$ provided that
\begin{eqnarray*}
	\|{f}\|_{L^p(0,T;X)}:=
	\begin{cases}
		\ds\left(\int_0^T \|{f(\cdot,t)}\|_X^p {\rm d}t\right)^{1/p} <\infty,&  1\leq p <\infty,\\[0.1in]
		\ds{\rm ess\; sup}_{0<t<T} \|{f(\cdot,t)}\|_X <\infty, &  p=\infty.
	\end{cases}
\end{eqnarray*}
Let $W^{m, p}(0,T; X)$ be the space of functions in $W^{m, p}(0,T)$ with respect to $\|\cdot\|_X$.  Similarly, for $0\leq t_0 <T$, let $C([t_0,T];X)$ be the space of functions in $C([t_0,T])$ with respect to $\|\cdot\|_X$. Moreover, define $C((0,T];X):=\cap_{0<t_0<T} C([t_0,T];X)$. All spaces are equipped with standard norms \cite{AdaFou,Eva}.


For $0<\mu<1$, define the Riemann-Liouville fractional left-sided and right-sided integral operators
\begin{eqnarray*}
\begin{cases}
\ds{J}_{0+}^{\mu} z(t):=\frac{1}{\Gamma(\mu)} \int_0^t \frac{z(s)}{(t-s)^{1-\mu}} d s,\quad 0<t\leq T,\\[0.2in]
\ds{J}_{T-}^{\mu} z(t):=\frac{1}{\Gamma(\mu)} \int_t^T \frac{z(s)}{(s-t)^{1-\mu}} d s,\quad 0\leq t < T,
\end{cases}
\end{eqnarray*}
the left-sided and right-sided Riemann-Liouville fractional derivative operators 
\begin{equation*}
{D}_{0+}^{\mu}z(t):=\frac{d}{dt}({J}_{0+}^{1-\mu} z)(t), \quad {D}_{T-}^{\mu}z(t):=-\frac{d}{dt}({J}_{T-}^{1-\mu} z)(t),
\end{equation*}
respectively, and the left-sided Caputo fractional derivative operator $^c\p_t^\mu z(t):=({J}_{0+}^{1-\mu} z')(t)$ \cite{Jinbook}.  


In this work, we consider the smooth variable exponent $\alpha:(0,T) \to (0,1)$ which satisfies the following assumptions:\\
{\bf H1}. $0<\alpha(t)<1$ on $t\in [0,T]$ such that $\alpha(t)$ is three times differentiable with $|\alpha'(t)|+|\alpha''(t)|+|\alpha'''(t)|\leq L$ for some $L>0$. \\
{\bf H2}. The fractional order $\alpha(\cdot):(0,T)\to (0,1)$ can be analytically extended to the sector $\mathcal S=\{|\arg z|<\theta\}$ with $\theta\in(0,\frac\pi2)$ and admits the estimate
$$
|\alpha(z)| \le \alpha(0)<1,\quad z\in\mathcal{S}.
$$

Furthermore, we denote $\alpha_0=\alpha(0)$ for simplicity. Throughout the paper, we shall use $C$, with or without subscript, to denote a positive generic constant, and it may take a different value at each event. 

%

\subsection{A transformed model and its well-posdeness}
We follow the idea of the perturbation method in \cite{Zhe} to transfer the form of \eqref{Prob-IBVP}. The main idea of this method is to replace the variable-exponent kernel by a suitable kernel such that their difference serves as a low-order perturbation term, without affecting other terms in the model.

\begin{lemma}\label{lem-ibvp}
The model \eqref{Prob-IBVP} is equivalent to the following form:
\begin{equation}\label{eq-ibvp}
\begin{cases}
^c \partial_t^{\alpha_0} u - \Delta u + \tilde g' * u = \tilde g u_0+F, & (x,t)\in\Omega_T,\\
u(x,0) = u_0(x),& x\in\Omega,\\
u(x,t) = 0, & (x,t)\in\partial\Omega\times(0,T),
\end{cases}
\end{equation}
where the function $\tilde g$ is defined by
\begin{align*}
\tilde g(t) = \int_0^t \frac{t^{-\alpha(r)}}{\Gamma(1-\alpha(r))} (-\alpha'(r) \ln t + \gamma(1-\alpha(r)) \alpha'(r))dr, \quad \gamma(r)=\frac{\Gamma'(r)}{\Gamma(r)}.
\end{align*}
\end{lemma}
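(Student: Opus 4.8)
The plan is to recognize that the auxiliary function $\tilde g$ is exactly the difference between the variable-exponent kernel $k(t)=t^{-\alpha(t)}/\Gamma(1-\alpha(t))$ and the constant-order kernel $k_0(t):=t^{-\alpha_0}/\Gamma(1-\alpha_0)$, and then to trade the resulting perturbation convolution for the terms appearing in \eqref{eq-ibvp}. To establish $\tilde g=k-k_0$, I would set $h(t,r):=t^{-\alpha(r)}/\Gamma(1-\alpha(r))$ and differentiate in the parameter $r$: using $\frac{\partial}{\partial r}t^{-\alpha(r)}=-\alpha'(r)(\ln t)\,t^{-\alpha(r)}$ together with $\frac{\partial}{\partial r}\Gamma(1-\alpha(r))^{-1}=\gamma(1-\alpha(r))\alpha'(r)\Gamma(1-\alpha(r))^{-1}$, the product rule reproduces precisely the integrand defining $\tilde g$. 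The fundamental theorem of calculus in $r$ then gives
\[
\tilde g(t)=\int_0^t \frac{\partial}{\partial r}h(t,r)\,dr=h(t,t)-h(t,0)=\frac{t^{-\alpha(t)}}{\Gamma(1-\alpha(t))}-\frac{t^{-\alpha_0}}{\Gamma(1-\alpha_0)}=k(t)-k_0(t).
\]

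With this identity the rest is a rewriting of the fractional derivative. I would split
\[
{}^c\partial_t^{\alpha(t)}u=k*\partial_t u=k_0*\partial_t u+\tilde g*\partial_t u={}^c\partial_t^{\alpha_0}u+\tilde g*\partial_t u,
\]
so that everything reduces to the convolution $\tilde g*\partial_t u$. Integrating by parts in the convolution variable yields
\[
(\tilde g*\partial_t u)(t)=\tilde g(0)\,u(t)-\tilde g(t)\,u_0+(\tilde g'*u)(t).
\]
Substituting into ${}^c\partial_t^{\alpha(t)}u-\Delta u=F$ and rearranging produces exactly \eqref{eq-ibvp}, once the boundary contribution $\tilde g(0)u(t)$ is shown to vanish. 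Because each of these manipulations is an exact identity rather than an approximation, the steps are reversible, and the initial and boundary data are untouched; hence a function solves \eqref{Prob-IBVP} if and only if it solves \eqref{eq-ibvp}, which is the asserted equivalence.

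The hard part will be the two analytic points near the singularity at $t=0$: that $\tilde g(0)=0$ and that the integration by parts is legitimate despite the singular kernels $k,k_0$ and the possibly singular $\partial_t u$. Both rest on the cancellation of the leading singularities in $k-k_0$. Expanding $\alpha(t)=\alpha_0+O(t)$ under assumption {\bf H1} and writing $t^{-\alpha(t)}=t^{-\alpha_0}\bigl(1-\alpha'(0)t\ln t+o(t\ln t)\bigr)$ shows $\tilde g(t)=O\bigl(t^{1-\alpha_0}|\ln t|\bigr)\to0$, so $\tilde g(0)=0$ and the boundary term drops out; the same expansion gives $\tilde g'(t)=O\bigl(t^{-\alpha_0}|\ln t|\bigr)$, which is integrable on $(0,T)$ since $\alpha_0<1$, so $\tilde g'*u$ is a genuine lower-order term and the integration by parts is justified by regularizing the convolution over $(\varepsilon,t)$ and letting $\varepsilon\to0^+$. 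The derivative bounds on $\alpha$ in {\bf H1} are exactly what control these remainder estimates uniformly in $t$.
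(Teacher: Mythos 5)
Your proposal is correct and follows essentially the same route as the paper: the kernel splitting $k=k_0+\tilde g$ via the fundamental theorem of calculus in the exponent parameter, the bounds $|\tilde g|\le Ct^{1-\alpha_0}(1+|\ln t|)$ and $|\tilde g'|\le Ct^{-\alpha_0}(1+|\ln t|)\in L^1(0,T)$, and the integration by parts $\tilde g*\partial_t u=-\tilde g\,u_0+\tilde g'*u$ using $\tilde g(0)=0$. If anything, you are slightly more explicit than the paper in justifying why the boundary term $\tilde g(0)u(t)$ vanishes and why the integration by parts near the singular endpoint is legitimate, which is a welcome refinement rather than a deviation.
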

\begin{proof}
Firstly, we split the kernel $k(t)$ into two parts as follows
\begin{align}
k(t)&=\frac{t^{-\alpha(t)}}{\Gamma(1-\alpha(t))}=\frac{t^{-\alpha_0}}{\Gamma(1-\alpha_0)}+\int_0^t\p_z\frac{t^{-\alpha(z)}}{\Gamma(1-\alpha(z))}dz=:\beta_{1-\alpha_0}+\tilde g(t),\label{hh3}
\end{align}
where, by direct calculations and the notation $\gamma(z):=\frac{\Gamma'(z)}{\Gamma(z)}$,
\begin{align*}
\tilde g(t)=\int_0^t\frac{t^{-\alpha(z)}}{\Gamma(1-\alpha(z))}\big(-\alpha'(z)\ln t+\gamma(1-\alpha(z))\alpha'(z) \big)dz.
\end{align*}
As 
$$
\begin{aligned}
t^{-\alpha(z)}=&t^{-\alpha_0}t^{\alpha_0-\alpha(z)} =t^{-\alpha_0}e^{(\alpha_0-\alpha(z))\ln t}
\\
\leq& t^{-\alpha_0}e^{Cz|\ln t|}\leq t^{-\alpha_0}e^{Ct|\ln t|}\leq Ct^{-\alpha_0},\quad t\in(0,T],
\end{aligned}
$$
the function $\tilde g:(0,T]\to \mathbb R$ could be bounded as
\begin{align*}
|\tilde g|\leq C\int_0^t t^{-\alpha_0}(1+|\ln t|)dz\leq Ct^{1-\alpha_0}(1+|\ln t|)\rightarrow 0\text{ as }t\rightarrow 0^+. 
\end{align*}
By similar estimates we bound $\tilde g'$ as
\begin{align*}
|\tilde g'|\leq Ct^{-\alpha_0}(1+|\ln t|)\in L^1(0,T). 
\end{align*}
We then invoke (\ref{hh3}) in the first equation of \eqref{Prob-IBVP} to get
\begin{align}\label{qq1}
^c\p_t^{\alpha_0}u+\tilde g*\p_tu -\Delta u=F. 
\end{align}
Notice the zero initial condition, we apply the integration by parts to get
$$\tilde g*\p_tu=-\tilde gu_0+\tilde g'*u, $$
which, together with (\ref{qq1}), leads to
\begin{align*}
^c\p_t^{\alpha_0}u -\Delta u+\tilde g'*u=F+\tilde gu_0. 
\end{align*}
The proof of the lemma is complete.
\end{proof}

\begin{lemma}\label{lem-esti-g}
Assuming that the fractional order $\alpha(\cdot):(0,T]\to (0,1)$ satisfies the assumptions {\bf H1} and {\bf H2}, the function $\tilde g(t)$ can also be analytically extended to the sector $\mathcal S_\theta:=\{z\in\mathbb C;|\arg z|<\theta\}$ with $\theta\in(0,\frac\pi2)$. For any $K>0$, there exists a constant $C=C(\alpha,\theta,K)>0$ such that
$$
|\tilde g(z)| \le C|z|^{1-\alpha_0}(|\ln z|+1)
$$
and
$$
|\tilde g'(z)| \le C|z|^{-\alpha_0}(|\ln z|+1)
$$
for any $z\in \mathcal S_\theta\cap \{|z|<K\}$
\end{lemma}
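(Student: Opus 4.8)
The plan is to mirror the real-variable computation already carried out inside the proof of Lemma~\ref{lem-ibvp}, but along a complex path, and then to read off the two bounds from the explicit cancellation between $k$ and its frozen-coefficient part $\beta_{1-\alpha_0}$. First I would set up the analytic continuation: replacing the real endpoint by $z\in\mathcal{S}_\theta$ and integrating along the segment $\zeta=sz$, $s\in[0,1]$, rewrite
\begin{equation*}
\tilde g(z)=\int_0^z \partial_\zeta\!\left(\frac{z^{-\alpha(\zeta)}}{\Gamma(1-\alpha(\zeta))}\right)d\zeta
=\frac{z^{-\alpha(z)}}{\Gamma(1-\alpha(z))}-\frac{z^{-\alpha_0}}{\Gamma(1-\alpha_0)},
\end{equation*}
which is just the splitting \eqref{hh3} read off from the fundamental theorem of calculus. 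Each ingredient of the integrand extends analytically: by H2 $\alpha$ is analytic on $\mathcal{S}$ and so is $\alpha'$; $z^{-\alpha(\zeta)}=e^{-\alpha(\zeta)\operatorname{Log}z}$ is analytic because $|\arg z|<\theta<\tfrac\pi2$ keeps $z$ off the branch cut of the principal logarithm; and since $|\alpha(\zeta)|\le\alpha_0$ forces $\operatorname{Re}(1-\alpha(\zeta))\ge 1-\alpha_0>0$, the argument $1-\alpha(\zeta)$ stays in a fixed compact subset of the right half-plane, away from the poles of $\Gamma$ and of $\gamma=\Gamma'/\Gamma$, so $1/\Gamma(1-\alpha(\zeta))$ and $\gamma(1-\alpha(\zeta))$ are analytic and uniformly bounded there. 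Differentiation under the integral (or Morera) then gives analyticity of $\tilde g$ on $\mathcal{S}_\theta$, agreeing with the real definition on $(0,T]$.

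For the size estimates I would work on $|z|\le 1$ first (the region $1\le|z|<K$ being routine once $\alpha'$ is controlled, since there the claimed right-hand sides are bounded below by $1$). The basic pointwise bound is
\begin{equation*}
|z^{-\alpha(\zeta)}|=|z|^{-\operatorname{Re}\alpha(\zeta)}\,e^{\operatorname{Im}\alpha(\zeta)\,\arg z}\le e^{\alpha_0\theta}\,|z|^{-\alpha_0},
\end{equation*}
using $\operatorname{Re}\alpha\le\alpha_0$ with $|z|\le1$ and $|\arg z|<\tfrac\pi2$. Combined with the boundedness of $1/\Gamma(1-\alpha)$ and $\gamma(1-\alpha)$, the remaining task is to exhibit the cancellation in the closed form. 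Writing $z^{-\alpha(z)}-z^{-\alpha_0}=z^{-\alpha_0}\big(e^{(\alpha_0-\alpha(z))\operatorname{Log}z}-1\big)$ and using $|e^w-1|\le|w|e^{|w|}$ together with a Lipschitz-type bound $|\alpha(z)-\alpha_0|\le C|z|$ produces a factor $O(|z|\,|\ln z|)$, while the difference of the two reciprocal-$\Gamma$ terms is $O(|\alpha(z)-\alpha_0|)=O(|z|)$. Adding these gives $|\tilde g(z)|\le C|z|^{1-\alpha_0}(|\ln z|+1)$.

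The bound on $\tilde g'$ follows by the same scheme after one differentiation: differentiating the closed form produces two $O(|z|^{-\alpha_0-1})$ singular terms, from $\partial_z z^{-\alpha(z)}$ (through its $-\alpha(z)/z$ part) and from $\partial_z z^{-\alpha_0}$, and these cancel at leading order, leaving the logarithmic factor $-\alpha'(z)\ln z$ and the $\gamma$-contribution, hence $|\tilde g'(z)|\le C|z|^{-\alpha_0}(|\ln z|+1)$.

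The step I expect to be the main obstacle is precisely the uniform Lipschitz bound $|\alpha(z)-\alpha_0|\le C|z|$, equivalently a uniform bound $|\alpha'(z)|\le C$ on the truncated sector. A direct Cauchy estimate on the largest disk contained in $\mathcal{S}$ about a point $w$ only yields $|\alpha'(w)|\lesssim \alpha_0/\operatorname{dist}(w,\partial\mathcal{S})$, which degenerates both as $w\to0$, where the sector pinches at its vertex, and as $\arg w\to\pm\theta$; fed into the path integral the former would create a logarithmically divergent $\int_0^1 ds/s$. To close this I would combine H1 and H2, reading the analytic extension of H2 as valid on a sector of slightly larger aperture together with a genuine neighborhood of the origin on which H1 pins down $\alpha$; then $\alpha'$ is bounded there and $\alpha(z)-\alpha_0=\int_0^1\alpha'(sz)\,z\,ds=O(|z|)$ for small $|z|$, after which the compact part of the sector is handled by continuity.
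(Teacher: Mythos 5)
Your argument is sound in outline but follows a genuinely different route from the paper's. The paper never invokes the closed form $\tilde g=k-\beta_{1-\alpha_0}$ that you read off from the fundamental theorem of calculus; instead it rescales the defining integral by $r\mapsto rt$ to obtain
\begin{equation*}
\tilde g(z)=z\int_0^1 \frac{e^{-\alpha(rz)\ln z}}{\Gamma(1-\alpha(rz))}\bigl(\gamma(1-\alpha(rz))\,\alpha'(rz)-\alpha'(rz)\ln z\bigr)\,dr,
\end{equation*}
declares this parameter integral to be the analytic extension, and bounds the integrand directly: the gain of one power of $|z|$ comes for free from the prefactor $z$, H2 enters only through $|e^{-\alpha(rz)\ln z}|\le e^{\alpha_0|\ln z|}\le e^{\alpha_0\theta}\max\{|z|^{-\alpha_0},|z|^{\alpha_0}\}$, and the bound on $\tilde g'$ is obtained by differentiating under the integral sign and estimating the six resulting terms in the same way. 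Your route instead extracts the gain of $|z|$ from the cancellation $z^{-\alpha(z)}-z^{-\alpha_0}=z^{-\alpha_0}\bigl(e^{(\alpha_0-\alpha(z))\operatorname{Log}z}-1\bigr)$, which makes the Lipschitz bound $|\alpha(z)-\alpha_0|\le C|z|$ indispensable. The two prices are in fact the same: the paper's integrand contains $\alpha'(rz)$ (and, for $\tilde g'$, also $\alpha''(rz)$), so it too needs $\sup_{0\le r\le 1}|\alpha'(rz)|\le C$, a condition that, integrated along the segment from $0$ to $z$, yields exactly your Lipschitz bound. What your version buys is that analyticity on $\mathcal S_\theta$ and agreement with the real definition are immediate from the closed form; what the paper's version buys is that no second cancellation is needed for $\tilde g'$, whereas your step where the two $O(|z|^{-\alpha_0-1})$ terms "cancel at leading order" must be carried out by hand (it does work: write that difference as $z^{-1}$ times a difference of the same type as $\tilde g$ itself and reuse the first estimate).

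Concerning the obstacle you single out: you are right that boundedness of $\alpha'$ on the truncated sector does not follow from H1--H2 as literally stated, and you should know that the paper's own proof has exactly the same gap, hidden in the sentence asserting that $1/\Gamma$, $\alpha$ and $\gamma$ are ``bounded on the compact set $\mathcal S_{\theta,K}$''. That set is not compact (its closure meets the rays $\arg z=\pm\theta$ and the vertex, where H2 asserts nothing), $\alpha$ is only assumed analytic and bounded there, and Cauchy estimates degenerate near the boundary precisely as you describe. Only the bounds on $1/\Gamma(1-\alpha(z))$ and $\gamma(1-\alpha(z))$ are unconditionally legitimate, since $|\alpha(z)|\le\alpha_0<1$ confines $1-\alpha(z)$ to the compact disk $\{|w-1|\le\alpha_0\}$ in the right half-plane, away from the poles of $\gamma$. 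So your closing remedy---reading H1--H2 as providing analyticity with a uniform derivative bound on a slightly enlarged sector, or adding that as an explicit hypothesis---is not a defect of your proof relative to the paper's; it is the assumption both proofs implicitly require.
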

\begin{proof} 
After the change of the variable $r/t \to r$ of the integral in the function $\tilde g$ defined in Lemma \ref{lem-ibvp}, we see that
\begin{align*}
\tilde g(t) = t\int_0^1 \frac{e^{-\alpha(rt)\ln t}}{\Gamma(1-\alpha(rt))} \big(\gamma(1-\alpha(rt)) \alpha'(rt)-\alpha'(rt)\ln t \big) dr.
\end{align*}
From the assumption {\bf H2}, the function $\tilde g(t)$ can be analytically extended to the right half plane $\{z\in\mathbb C; \Re z>0\}$, that is, 
\begin{align*}
\tilde g(z) = z\int_0^1 \frac{e^{-\alpha(rz)\ln z}}{\Gamma(1-\alpha(rz))} \big(\gamma(1-\alpha(rz)) \alpha'(rz)-\alpha'(rz)\ln z \big) dr.
\end{align*}
Here $\ln z$ is taken on its principal value branch, that is, $\ln z=\ln |z| + i \arg z$ with $\arg z\in (-\pi,\pi)$. By a direct calculation, we have
\begin{align*}
|\tilde g(z)| \le |z| \int_0^1 \frac{e^{|\alpha(rz)\ln z|}}{|\Gamma(1-\alpha(rz))|} \big(|\gamma(1-\alpha(rz)) \alpha'(rz)| + |\alpha'(rz)\ln z| \big) dr.
\end{align*}
We note that the functions $\frac{1}{\Gamma(z)}$, $\alpha(z)$ and $\gamma(z)$ are all analytic functions on complex plane, which implies that they are bounded on the compact set $\mathcal S_{\theta,K}$. Consequently, we have
\begin{align*}
|\tilde g(z)| \le C|z| \int_0^1 e^{|\ln z|} \big(1 + |\ln z| \big) dr,\quad z\in \mathcal S_{\theta,K},
\end{align*}
which combined with the fact $|\ln z| = \left((\ln |z|)^2 + (\arg z)^2 \right)^{\frac12} \le |\ln |z|| + \theta$ implies that
\begin{align*}
|\tilde g(z)| \le& Ce^{\theta} |z|e^{\alpha_0|\ln |z||}\int_0^1 \big(1 + |\ln z| \big) dr\\
\le& Ce^{\theta}\max\{|z|^{1-\alpha_0},|z|^{1+\alpha_0}\}\int_0^1 \big(1 + |\ln z| \big) dr
\le C|z|^{1-\alpha_0}(|\ln z|+1)
\end{align*}
for $z\in \mathcal S_{\theta,K}$. Next, we will give an estimate for the first order derivative $\tilde g'(z)$. For this, by a direct calculation, we see that
\begin{align*}
\tilde g'(z) =& \int_0^1 \frac{e^{-\alpha(rz)\ln z}}{\Gamma(1-\alpha(rz))} \big(\gamma(1-\alpha(rz)) \alpha'(rz)-\alpha'(rz)\ln z \big) dr\\
&+ z\int_0^1 \frac{\partial }{\partial z} \left(\frac{\gamma(1-\alpha(rz)) \alpha'(rz)}{\Gamma(1-\alpha(rz))}  \right) e^{-\alpha(rz)\ln z} dr  \\
&-  \int_0^1 \frac{\gamma(1-\alpha(rz)) \alpha'(rz)}{\Gamma(1-\alpha(rz))} e^{-\alpha(rz)\ln z} \left( \alpha'(rz)rz\ln z + \alpha(rz) \right) dr
\\
&- z\ln z\int_0^1 \frac{\partial }{\partial z} \left(\frac{ \alpha'(rz)}{\Gamma(1-\alpha(rz))}  \right) e^{-\alpha(rz)\ln z} dr\\
&- \int_0^1 \frac{\alpha'(rz)}{\Gamma(1-\alpha(rz))}  e^{-\alpha(rz)\ln z}  dr\\
&+ \ln z\int_0^1 \frac{\alpha'(rz)}{\Gamma(1-\alpha(rz))}  e^{-\alpha(rz)\ln z} \left( \alpha'(rz)rz\ln z + \alpha(rz) \right)   dr.
\end{align*}
By argument similar to estimating $\tilde g$, we arrive at the following inequalities for  $\tilde g'$:
\begin{align*}
|\tilde g'(z)| \le & C\int_0^1 e^{\alpha_0|\ln z|} \big(1+|\ln z| \big) dr + C|z|\int_0^1  e^{\alpha_0|\ln z|} dr  \\
&+ C \int_0^1  e^{\alpha_0|\ln z|} \left(1+ |z\ln z| \right) dr+
C|z||\ln z|\int_0^1  e^{\alpha_0|\ln z|} dr
+ C \int_0^1 e^{\alpha_0|\ln z|}  dr\\
&+ |\ln z|\int_0^1  e^{\alpha_0|\ln z|} \left(1+|z\ln z| \right)   dr
\\
\le & C|z|^{-\alpha_0}(|\ln z|+1) + C|z|^{1-\alpha_0} (1+ |\ln z| + |\ln z|^2),\quad z\in \mathcal S_{\theta,K}.
\end{align*}
Finally, by noting the estimate 
$$
|z|^{1-\alpha_0} (1+ |\ln z| + |\ln z|^2) \le C|z|^{-\alpha_0}(|\ln z|+1),\quad z\in \mathcal S_{\theta,K},
$$
it follows that
\begin{align*}
|\tilde g'(z)| \le &C|z|^{-\alpha_0}(|\ln z|+1) ,\quad z\in \mathcal S_{\theta,K}.
\end{align*}
This completes the proof of the lemma.
\end{proof}
\begin{corollary}
Assuming $\phi(z)$ is analytic function in the sector $\mathcal S$ satisfying $|\phi(z)|\le C|z|^{\rho}$ with $\rho\ge0$, then the convolution $g'*\phi(t), t\in(0,T)$ can be analytically extended to the sector $\mathcal S_\theta:=\{z\in\mathbb C;|\arg z|<\theta\}$, and the extended convolution admits the following estimate
$$
   |\tilde g'*\phi(z)| \le C|z|^{\rho},\quad z\in \mathcal S_\theta\cap \{|z|\le K\},
$$
where $\beta>\alpha_0$ and the constant $C>0$ only depends on $T,\rho,K,\alpha$.
\end{corollary}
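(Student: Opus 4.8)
The plan is to realize the analytic continuation of the convolution through the rescaled representation
$$\tilde g'*\phi(z)=\int_0^z \tilde g'(z-\zeta)\phi(\zeta)\,d\zeta = z\int_0^1 \tilde g'\big(z(1-s)\big)\phi(sz)\,ds,$$
where the second equality comes from the substitution $\zeta=sz$. Since multiplication by a real number $s\in[0,1]$ leaves the argument unchanged, both $z(1-s)$ and $sz$ stay in $\mathcal S_\theta$ whenever $z\in\mathcal S_\theta$, so the integrand is well defined; moreover for real $t>0$ this reduces to the usual convolution $\int_0^t\tilde g'(t-s)\phi(s)\,ds$. Hence by uniqueness of analytic continuation it suffices to show that the right-hand side defines an analytic function on $\mathcal S_\theta$ and obeys the claimed bound.

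First I would establish analyticity. For each fixed $s\in[0,1)$ the integrand $z\mapsto z\,\tilde g'(z(1-s))\phi(sz)$ is analytic on $\mathcal S_\theta$, being a product of the analytic factor $z$, of $\tilde g'$ (analytic by Lemma \ref{lem-esti-g}), and of $\phi$. To pass analyticity through the integral I would use Morera's theorem combined with Fubini: on any triangle $\gamma$ compactly contained in $\mathcal S_\theta$ (hence bounded away from the origin) the contour integral of each $F(z,s)=z\,\tilde g'(z(1-s))\phi(sz)$ over $\gamma$ vanishes, and the interchange $\oint_\gamma\int_0^1 F\,ds\,dz=\int_0^1\oint_\gamma F\,dz\,ds=0$ is justified because $F$ is dominated by an $s$-integrable majorant. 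Indeed, using $|\tilde g'(z(1-s))|\le C|z|^{-\alpha_0}(1-s)^{-\alpha_0}(|\ln(z(1-s))|+1)$ together with $|\phi(sz)|\le C|z|^\rho s^\rho$, the only singularities in $s$ are the integrable factors $(1-s)^{-\alpha_0}$ (with a mild $|\ln(1-s)|$ weight) at $s=1$ and $s^\rho$ at $s=0$, both integrable since $\alpha_0<1$ and $\rho\ge0$. This yields analyticity on $\mathcal S_\theta$.

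For the estimate I would combine the two pointwise bounds directly. Splitting $|\ln(z(1-s))|\le |\ln|z||+|\ln(1-s)|+\theta$ and collecting the powers of $|z|$ gives
$$|\tilde g'*\phi(z)|\le C|z|^{1-\alpha_0+\rho}\int_0^1 (1-s)^{-\alpha_0}s^\rho\big(|\ln|z||+|\ln(1-s)|+1\big)\,ds.$$
The $s$-integral is a finite combination of Beta-type integrals, namely $B(\rho+1,1-\alpha_0)$ and its logarithmic variant, both convergent because $\rho+1>0$ and $1-\alpha_0>0$; factoring out the constant term $|\ln|z||$ shows the integral is bounded by $C(1+|\ln|z||)$. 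Hence $|\tilde g'*\phi(z)|\le C|z|^{1-\alpha_0+\rho}(1+|\ln|z||)$.

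Finally, the gained exponent $1-\alpha_0>0$ closes the argument: the function $|z|^{1-\alpha_0}(1+|\ln|z||)$ is bounded on $(0,K]$, since it tends to $0$ as $|z|\to0^+$ and is continuous elsewhere, so it is dominated by a constant depending only on $K$ and $\alpha_0$. This produces $|\tilde g'*\phi(z)|\le C|z|^\rho$ on $\mathcal S_\theta\cap\{|z|\le K\}$, as claimed. I expect the main obstacle to be the analyticity step rather than the final estimate: one must verify that the logarithmic endpoint singularity of $\tilde g'$ at $s=1$ is integrable \emph{uniformly} in $z$ on compact subsets, so that interchanging integration and continuation is legitimate; once Lemma \ref{lem-esti-g} is available, the bookkeeping of powers of $|z|$ and of the Beta integrals is routine.
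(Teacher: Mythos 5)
Your proof is correct and takes essentially the same route as the paper's: the same rescaled representation $z\int_0^1 \tilde g'(z(1-s))\phi(sz)\,ds$, the same invocation of Lemma \ref{lem-esti-g}, and the same Beta-integral power counting, so that the surplus factor $|z|^{1-\alpha_0}$ (up to a logarithm) is absorbed on the compact set $\mathcal S_\theta\cap\{|z|\le K\}$. The only deviations are refinements rather than a different strategy: you justify analyticity carefully via Morera plus Fubini where the paper merely asserts it from integrability of the integrand, and you keep the logarithm explicit and bound $|z|^{1-\alpha_0}(1+|\ln|z||)$ directly, whereas the paper first absorbs $|\ln z|$ into a power $|z|^{-\beta}$ with $\alpha_0<\beta<1$ --- which is the origin of the otherwise unexplained $\beta$ appearing in the corollary's statement.
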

\begin{proof}
    Firstly, noting from the definition of the convolution, we change the variable to get
$$
\tilde g'*\phi (t) = \int_0^t \tilde g'(r) \phi(t-r)dr = t\int_0^1 \tilde g'(rt) \phi(t-rt)dr
$$
We then directly extended to the complex function
$$
\tilde g'*\phi (z) := z\int_0^1 \tilde g'(rz) \phi(z-rz)dr,\quad z\in\mathcal S.
$$
Since the above Lemma \ref{lem-esti-g} implies that the integrand $\tilde g'(rz)\phi(z-rz)$ is integrable, we see that the extended convolution $\tilde g'*\phi(z)$ is actually an analytical function in the sector $\mathcal S$. Moreover, in view of the fact that $|z|^{-\alpha_0} |\ln z|\le C|z|^{-\beta}$ with $1>\beta>\alpha_0$ for $z\in \mathcal S_{\theta,K}$, we have
$$
\begin{aligned}
|\tilde g'*\phi (z)| \le& C|z| \int_0^1 |rz|^{-\alpha_0}(1+|\ln rz|) |\phi(z-rz)|dr
\\ 
\le& C|z|^{1+\rho-\beta} \int_0^1 r^{-\beta}(1-r)^{\rho}dr \le C|z|^{1+\rho-\beta}.
\end{aligned}
$$
This completes the proof of the corollary by using the inequality $|z|^{1+\rho-\beta} \le C|z|^\rho$ for $z\in \mathcal S_{\theta,K}$.
\end{proof}

Now we give the well-posedness of the problem (\ref{eq-ibvp}). 
%


\begin{lemma}\label{wellposedness-ivp}
Suppose $F\in W^{1,1}(0,T;L^2(\Omega))$ and $u_0\in H^2(\Omega)\cap H_0^1(\Omega)$, then the initial boundary value problem \eqref{Prob-IBVP} admits a unique solution in $W^{1,p}(0,T;L^2(\Omega))\cap L^p(0,T;H^2(\Omega)\cap H_0^1(\Omega))$ for $1<p<\frac{1}{1-\alpha_0}$ and 
\begin{equation*}
\|u\|_{W^{1,p}(0,T;L^2(\Omega))}+\|u\|_{L^p(0,T;H^2(\Omega))}\leq C\left(\|F\|_{W^{1,1}(0,T;L^2(\Omega))}+\|u_0\|_{H^2(\Omega)}\right).
\end{equation*}
Moreover, if $\alpha(t)$ can be analytically extended to a sector $\mathcal S_\theta:=\{z\in\mathbb C; |\arg z|<\theta\}$ with $\theta\in(0,\frac{\pi}{2})$, then the solution $u:(0,T)\to L^2(\Omega)$ to problem \eqref{Prob-IBVP} with $F\equiv0$ can be analytically extended to $(0,\infty)$.
\end{lemma}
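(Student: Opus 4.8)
The plan is to treat the transformed problem \eqref{eq-ibvp} as a lower–order perturbation of the constant–exponent subdiffusion equation $^c\p_t^{\alpha_0} v-\Delta v=f$, and then establish the two assertions separately. For the well‑posedness, I would first record the maximal regularity for the constant–order problem: for a source of the stated regularity and $u_0\in H^2(\om)\cap H_0^1(\om)$, the solution of $^c\p_t^{\alpha_0} v-\Delta v=f$, $v(0)=u_0$, lies in $W^{1,p}(0,T;L^2(\om))\cap L^p(0,T;H^2(\om)\cap H_0^1(\om))$ for $1<p<\frac{1}{1-\alpha_0}$, with the corresponding a priori estimate. The ceiling $p<\frac{1}{1-\alpha_0}$ is forced by the sharp temporal behaviour $\p_t v\sim t^{\alpha_0-1}$ near $t=0$, which is $p$‑integrable exactly in this range. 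From the proof of Lemma \ref{lem-ibvp} we have $|\tilde g|\le Ct^{1-\alpha_0}(1+|\ln t|)$ and $|\tilde g'|\le Ct^{-\alpha_0}(1+|\ln t|)\in L^1(0,T)$, so $\tilde g\,u_0\in W^{1,1}(0,T;L^2(\om))$ and the right–hand side $F+\tilde g u_0$ has the regularity required above.

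I then set up the fixed–point map $u\mapsto v$ solving $^c\p_t^{\alpha_0}v-\Delta v=F+\tilde g u_0-\tilde g'\!*u$ with $v(0)=u_0$. Because $\tilde g'\in L^1(0,T)$, the Volterra convolution $\tilde g'\!*(\cdot)$ has arbitrarily small operator norm on a short interval $[0,\tau]$; a contraction argument on $[0,\tau]$, followed by step‑by‑step continuation across $[0,T]$ and a Gronwall closing of the estimate, yields the unique solution together with the stated bound.

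For the analytic extension (with $F\equiv0$) I would pass to the Dirichlet eigensystem $\{\lambda_n,\phi_n\}$ of $-\Delta$ on $\om$ and expand $u(\cdot,t)=\sum_n u_n(t)\phi_n$. Projecting \eqref{eq-ibvp} gives, for each $n$, the scalar Volterra equation $^c\p_t^{\alpha_0}u_n+\lambda_n u_n+\tilde g'\!*u_n=\tilde g\,(u_0,\phi_n)$ with $u_n(0)=(u_0,\phi_n)$. Inverting the principal part with the Mittag–Leffler solution operators turns this into
$$u_n(t)=E_{\alpha_0,1}(-\lambda_n t^{\alpha_0})(u_0,\phi_n)+\int_0^t(t-s)^{\alpha_0-1}E_{\alpha_0,\alpha_0}(-\lambda_n(t-s)^{\alpha_0})\big(\tilde g(s)(u_0,\phi_n)-(\tilde g'\!*u_n)(s)\big)\,ds,$$
which I solve by a Neumann series in the convolution operator. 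Since the Mittag–Leffler kernels are entire and $z^{\alpha_0}$ is analytic on $\mathcal S_\theta$, and since the preceding corollary shows that $\tilde g'\!*\phi$ extends analytically with the same type of bound, every term of the series extends analytically to $\mathcal S_\theta$; the quasi‑nilpotent (Volterra) structure then gives convergence on any bounded subsector. Summing over $n$ in $L^2(\om)$ — using $\sum_n|(u_0,\phi_n)|^2<\infty$ together with the sectorial decay $|E_{\alpha_0,1}(-\lambda_n z^{\alpha_0})|\le C/(1+\lambda_n|z|^{\alpha_0})$ — shows that $u(z)=\sum_n u_n(z)\phi_n$ is an $L^2(\om)$‑valued analytic function on $\mathcal S_\theta$ that coincides with the solution on $(0,T)$. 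Since the open sector $\mathcal S_\theta$ contains the entire positive real axis, restricting back to the reals extends $u$ analytically from $(0,T)$ to $(0,\infty)$.

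The main obstacle I expect is the complex–analytic part: combining the sectorial Mittag–Leffler asymptotics with the power–logarithmic singularity of $\tilde g'$ from Lemma \ref{lem-esti-g} so as to obtain, uniformly in $n$ and uniformly on compact subsets of $\mathcal S_\theta$, the bounds that force both the Neumann series (in the convolution) and the eigenfunction series (in $n$) to converge in $L^2(\om)$. Because Lemma \ref{lem-esti-g} and the preceding corollary only control $\tilde g,\tilde g'$ on $\{|z|<K\}$, the extension to all of $(0,\infty)$ must be obtained by exhausting the sector with increasing $K$ and patching the locally analytic pieces; the $K$‑dependence of the constants precludes uniform global bounds, but yields local analyticity on each bounded subsector, which is exactly what is needed to conclude analyticity on the whole of $(0,\infty)$.
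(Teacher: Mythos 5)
Your proposal is correct in substance and lands on the same two conclusions, but its organization differs from the paper's proof in ways worth noting. For the well-posedness part the paper offers no argument at all: it cites \cite{Zhe} for unique existence and the a priori estimate, and spends the whole proof on $t$-analyticity; your contraction-mapping sketch (maximal regularity for the constant-order problem plus smallness of $\|\tilde g'\|_{L^1(0,\tau)}$) is a legitimate self-contained substitute, with the caveat that the ``step-by-step continuation'' needs care because both $^c\p_t^{\alpha_0}$ and $\tilde g'*u$ are nonlocal in time, so on each new subinterval the history terms must be carried along as known sources (or one avoids continuation entirely via a weighted Bielecki-type norm). For the analyticity part, both arguments share the same skeleton: rewrite \eqref{eq-ibvp} as a Volterra integral equation built from the Mittag--Leffler solution kernels, iterate, obtain convergence with $\Gamma(n\alpha_0+1)$ in the denominator uniformly on the compact subsectors $\mathcal S_{\theta,K}$ (via the Beta--Gamma identity and the bounds of Lemma \ref{lem-esti-g}), and then exhaust the sector by letting $K$ grow. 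The difference is that the paper runs the Picard iteration directly on $L^2(\Omega)$-valued functions, using the operator bounds on $S_1(z)$ and $S_2(z)$ from estimate (7) of \cite{LiImYa}, whereas you project onto the Dirichlet eigensystem, run a Neumann series separately for each mode, and then sum over $n$. The price of the mode-wise route is exactly the obstacle you flag: you must verify that the per-mode bounds are uniform in $n$ before summing. This does work --- the constants in $|z^{\alpha_0-1}E_{\alpha_0,\alpha_0}(-\lambda_n z^{\alpha_0})|\le C|z|^{\alpha_0-1}$ and in the sectorial bound for $E_{\alpha_0,1}(-\lambda_n z^{\alpha_0})$ are independent of $\lambda_n$, so the iteration gives $|u_n(z)|\le C_K\,|(u_0,\varphi_n)|$ on $\mathcal S_{\theta,K}$ and the eigenfunction series converges uniformly there --- but the paper's operator-level formulation makes this uniformity automatic and removes the summation step altogether. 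Your closing remark about patching bounded subsectors coincides with the paper's ``since $K$ is arbitrarily chosen'' conclusion, so the endgame is identical.
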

\begin{proof}
The unique existence and the stability estimate can be found in \cite{Zhe}. So it suffices to show the $t$-analyticity. For this, by regarding the convolution term in \eqref{eq-ibvp} as a new source, from the argument used in Li et al. \cite{LiImYa}, we can rewrite the equivalent integral form of \eqref{eq-ibvp} as follows
\begin{align*}
u(t)=S_1(t)u_0 + t\int_0^1 \tilde g(rt) S_2(t-rt) u_0 d r
-t\int_0^1 S_2(t-rt) (\tilde g'*u)(rt) d r.
\end{align*}
Here $S_1(z),S_2(z):L^2(\Omega)\rightarrow L^2(\Omega)$ for $\Re z>0$ is defined by
\begin{align*}
S_1(z)a:=&\sum_{n=1}^\infty (a,\varphi_n)_{L^2(\Omega)} E_{\alpha_0,1}(-\lambda_n z^{\alpha_0})
\varphi_n, \\
S_2(z)a:=&-\sum_{n=1}^\infty (a,\varphi_n)_{L^2(\Omega)} z^{\alpha_0-1}E_{\alpha_0,\alpha_0}(-\lambda_n z^{\alpha_0}) \varphi_n,\quad a\in L^2(\Omega).
\end{align*}
Setting $u^0=0$, we inductively define $u^{n+1}(z)$ $(n=0,1,\cdots)$ as
follows:
\begin{align*}
u^{n+1}(z)
=S_1(z)u_0 + z\int_0^1 \tilde g(rz) S_2(z-rz) u_0 d r
- z\int_0^1 S_2(z-rz) (\tilde g'*u^n)(rz) d r.
\end{align*}
It is not difficult to check that $u^n (z)$ is analytic for any $n\in\mathbb N$. It remains to show that $u^n$ is uniformly convergent on any given compact set $\mathcal S_{\theta,K}$ with $K>0$. in view of the estimate (7) in \cite{LiImYa} for the operators $S_1$ and $S_2$, we see
\begin{align*}
\|u^{1}(z) - u^0(z)\|_{L^2(\Omega)}
\le&\left\|S_1(z)u_0\right\|_{L^2(\Omega)} + \left\|z\int_0^1 g(rz) S_2(z-rz) u_0 d r\right\|_{L^2(\Omega)}\\
\leq& C\|u_0\|_{L^2(\Omega)}
+ C|z|^{\alpha_0}\|u_0\|_{L^2(\Omega)} \int_0^1 (1-r)^{\alpha_0-1} d r\\
\leq& C\|u_0\|_{L^2(\Omega)}=:M_1.
\end{align*}
Next by induction we will prove that
\begin{align}
\label{esti-un}
\|u^{n+1}(z) - u^n(z)\|_{L^2(\Omega)}
\le \frac{C^{n}(| z|^{\alpha_0}\Gamma(\alpha_0))^n}{\Gamma(n\alpha_0+1)}M_1,
\quad n=0,1,2,..., \ \forall z \in \mathcal S_{\theta,K}.
\end{align}
Again by the use of the estimates (7) in \cite{LiImYa}, and noting the estimates in Lemma \ref{lem-esti-g}, it follows that
\begin{align*}
\|u^{n+1}(z) - u^n(z)\|_{L^2(\Omega)}
\leq& C\left\| z\int_0^1 S_2(z-rz) (\tilde g'*(u^n-u^{n-1})(rz) d r \right\|_{L^2(\Omega)}\\
\leq& C|z|^{\alpha_0} \int_0^1 (1-r)^{\alpha_0-1} \|\tilde g'*(u^n-u^{n-1})(rz)\|_{L^2(\Omega)} d r .
\end{align*}
By the assumption (\ref{esti-un}) of induction, we derive that
\begin{align*}
\|u^{n+1}(z) - u^n(z)\|_{L^2(\Omega)}
\leq& C  |z|^{\alpha_0}
\int_0^1 (1-r)^{\alpha_0-1} \frac{C^{n-1}(| z|^{\alpha_0}\Gamma(\alpha_0))^{n-1}}{\Gamma((n-1)\alpha_0+1)}M_1 d r \nonumber\\
=& C|z|^{\alpha_0}M_1\frac{(C|z|^{\alpha_0}\Gamma(\alpha_0))^{n-1}}{\Gamma((n-1)\alpha_0+1)} B(\alpha_0, (n-1)\alpha_0+1).
\end{align*}
Noting the relation between Gamma function and Beta function
$$
B(\alpha_0,(n-1)\alpha_0+1) = \frac{\Gamma(\alpha_0) \Gamma((n-1)\alpha_0+1)}{\Gamma(n\alpha_0+1)},
$$
we finally obtain that
\begin{align*}
\|u^{n+1}(z) - u^n(z)\|_{L^2(\Omega)}
\leq&  M_1\frac{(C|z|^{\alpha_0}\Gamma(\alpha_0))^{n}}{\Gamma(n\alpha_0+1)} .
\end{align*}
Hence the proof of \eqref{esti-un} is completed by induction.  Therefore, for any $z\in\mathcal S_{\theta,K}$, we have
\begin{equation*}
\Vert u^n(z)\Vert_{L^2(\Omega)}
\le \sum_{k=0}^{n-1} \Vert u^{k+1}(z) - u^k(z)\Vert_{L^2(\Omega)}
\le \sum_{k=0}^{\infty} M_1\frac{(CK^{\alpha_0}\Gamma(\alpha_0))^n}
{\Gamma(n\alpha_0+1)}< \infty,
\end{equation*}
where the last inequality is due to the asymptotic expansion of the Gamma function
\begin{equation*}
\Gamma(\eta) = e^{-\eta}\eta^{\eta-\frac 1 2}(2\pi)^{\frac 1 2}
\left(1 + O\left(\frac 1\eta \right)\right),\mbox{ as }\eta\rightarrow +\infty
\end{equation*}
(e.g., Abramowitz and Stegun \cite{AS}, p.257). Therefore, there exists $u(z)\in L^2(\Omega)$ such that
$\|u^n(z) - u(z)\|_{L^2(\Omega)}$ tends to $0$ as $n \rightarrow \infty$ uniformly in $z\in \mathcal S_{\theta,K}$. Recalling the analyticity of $u^n$ in $z\in \mathcal S_{\theta,K}$ for $n=1,2,\cdots$, we see that $z\mapsto u(z)\in L^2(\Omega)$ is analytic in $\mathcal S_{\theta,K}$. Moreover, since $K$ is arbitrarily chosen, we deduce $u(z)$ is analytic in the sector $\mathcal S_\theta$. We complete the proof of the theorem.
\end{proof}

\subsection{Weak unique continuation principle}
In this section, we consider the unique continuation principle of the solution of the homogeneous variable-exponent sub-diffusion equation with $F\equiv0$,
\begin{equation}\label{eq-F=0}
\begin{cases}
^c \partial_t^{\alpha_0} u - \Delta u + \tilde g' * u = \tilde g u_0, & (x,t)\in\Omega_T,\\
u(x,0) = u_0(x),& x\in\Omega,\\
u(x,t) = 0, & (x,t)\in\partial\Omega\times(0,T).
\end{cases}
\end{equation}
The distinctive feature of solution propagation in fractional diffusion equations is their unique continuation property, which is widely utilized to address inverse source problems and to assess the feasibility of approximate controllability, as explored by Fujishiro and Yamamoto \cite{FuYa}, and further elaborated by Jiang, Li, Liu, and Yamamoto \cite{Jiang17}.

Since the solution to the problem (\ref{eq-F=0}) is $t$-analytic, we can analytically extend the domain $(0,T]$ to the interval $(0,\infty)$. Therefore, the Laplace transform could be used to the above system and we get
$$
\begin{cases}
s^{\alpha_0} \hat u(s) - \Delta \hat u(s) + \widehat{\tilde g'}  \hat u(s) = \widehat{\tilde g} u_0 + s^{\alpha_0-1} u_0, & x\in\Omega,\\
\hat u(s) = 0, & x\in\partial\Omega.
\end{cases}
$$
In view of the fact that $\widehat{\tilde g'} = s\widehat{\tilde g} + \tilde g(0) = s\widehat{\tilde g}$, the above system in the frequency domain can be rephrased as follows
$$
\begin{cases}
s^{\alpha_0} \hat u(s) - \Delta \hat u(s) + [s^{\alpha_0} + s\widehat{\tilde g}(s)]  \hat u(s) = [s^{\alpha_0-1}+\widehat{\tilde g}(s) u_0, & x\in\Omega,\\
\hat u(s) = 0, & x\in\partial\Omega.
\end{cases}
$$

By $\left\{ \lambda_n, \varphi_n \right\}_{n=1}^{\infty}$ the Dirichlet eigensystem of the operator $-\Delta$, we conclude from the Fourier series theory that 
$$
\hat u(x,s) = \sum_{n=1}^\infty (u_0,\varphi_n) \varphi_n \frac{s^{\alpha_0-1} + \widehat{\tilde g}(s)}{s^{\alpha_0} + s\widehat{\tilde g}(s) + \lambda_n}.
$$

The denominator in the above expression is not zero in some open set in complex plane. In fact, we have the following lemma.

\begin{lemma}
There holds that
$$
s^{\alpha_0-1} + \widehat{\tilde g}(s) \not\equiv0.
$$
\end{lemma}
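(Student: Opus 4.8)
The plan is to recognize the quantity $s^{\alpha_0-1}+\widehat{\tilde g}(s)$ as the Laplace transform of the original variable-exponent kernel $k$, and then to exploit the manifest positivity of $k$. Concretely, recall from the splitting \eqref{hh3} that $k(t)=\beta_{1-\alpha_0}(t)+\tilde g(t)$ with $\beta_{1-\alpha_0}(t)=t^{-\alpha_0}/\Gamma(1-\alpha_0)$. Since the standard formula $\int_0^\infty e^{-st}t^{-\alpha_0}\,dt=\Gamma(1-\alpha_0)s^{\alpha_0-1}$ yields $\mathcal{L}[\beta_{1-\alpha_0}](s)=s^{\alpha_0-1}$, taking the Laplace transform of \eqref{hh3} termwise gives the identity $\widehat{k}(s)=s^{\alpha_0-1}+\widehat{\tilde g}(s)$. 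Thus the lemma is equivalent to the assertion $\widehat{k}\not\equiv 0$.

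First I would verify that $\widehat{k}(s)$ is well defined for real $s>0$. Using the analytic extension of $\alpha$ furnished by {\bf H2}, the kernel $k(t)=t^{-\alpha(t)}/\Gamma(1-\alpha(t))$ is defined for all $t>0$ with $\alpha(t)$ real along the positive axis; near $t=0$ one has $k(t)\sim t^{-\alpha_0}/\Gamma(1-\alpha_0)$, which is integrable since $\alpha_0<1$, while for $t\ge 1$ the bound $|\alpha(t)|\le\alpha_0<1$ gives $t^{-\alpha(t)}\le t^{\alpha_0}$ together with $\Gamma(1-\alpha(t))$ bounded away from zero (because $1-\alpha(t)\in[1-\alpha_0,1+\alpha_0]\subset(0,\infty)$), so $k$ grows at most polynomially. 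Hence $e^{-st}k(t)\in L^1(0,\infty)$ and $\widehat{k}(s)=\int_0^\infty e^{-st}k(t)\,dt$ converges for every $s>0$.

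The decisive observation is positivity: for real $t>0$ the factor $t^{-\alpha(t)}=e^{-\alpha(t)\ln t}$ is strictly positive, and because $\alpha(t)<1$ forces $1-\alpha(t)>0$, the Gamma factor $\Gamma(1-\alpha(t))$ is positive as well; consequently $k(t)>0$ for all $t>0$. Therefore, for any real $s>0$, $\widehat{k}(s)=\int_0^\infty e^{-st}k(t)\,dt>0$, which in particular shows $\widehat{k}\not\equiv 0$, and via the identity above gives $s^{\alpha_0-1}+\widehat{\tilde g}(s)\not\equiv 0$.

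The argument is short, and the only point requiring genuine care is the behavior for $t>T$: one must confirm that the extension of $\alpha$ supplied by {\bf H2} stays real-valued and strictly below $1$ along the positive real axis, so that both the positivity $k>0$ and the at-infinity integrability survive the passage beyond the original interval $(0,T)$. I expect this verification — rather than the termwise Laplace transform of \eqref{hh3}, which is routine — to be the main thing to pin down.
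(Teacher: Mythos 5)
Your proof is correct, but it takes a genuinely different route from the paper's. The paper argues by contradiction: if $s^{\alpha_0-1}+\widehat{\tilde g}(s)\equiv 0$, then Laplace inversion forces $\tilde g(t)=-t^{-\alpha_0}/\Gamma(1-\alpha_0)$, which is incompatible with the bound $|\tilde g(t)|\le Ct^{1-\alpha_0}(1+|\ln t|)\to 0$ as $t\to 0^+$ established in Lemma \ref{lem-ibvp}, since the putative right-hand side blows up at the origin. You instead identify $s^{\alpha_0-1}+\widehat{\tilde g}(s)$ with $\widehat k(s)$ through the splitting \eqref{hh3} and conclude from the pointwise positivity $k(t)>0$ that $\widehat k(s)>0$ for every real $s>0$. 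Both proofs rest on the same decomposition $k=\beta_{1-\alpha_0}+\tilde g$; the paper exploits the near-origin asymptotics of the two pieces separately, while you exploit the sign of their sum. Your route avoids Laplace inversion (hence Lerch's uniqueness theorem) and proves strictly more: the quantity is positive on the whole ray $s>0$, not merely not identically zero, which is in fact what the argument following the lemma uses when it divides out the common factor $s^{\alpha_0-1}+\widehat{\tilde g}(s)$ in the proof of Theorem \ref{continuation-interior}.

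The one point you flag but leave open --- that the extension of $\alpha$ supplied by \textbf{H2} stays real-valued on $(T,\infty)$ --- closes in one line by Schwarz reflection: the sector $\mathcal{S}$ is symmetric under complex conjugation and $\alpha$ is real on $(0,T)$, so $\overline{\alpha(\bar z)}$ is analytic on $\mathcal{S}$ and agrees with $\alpha$ on $(0,T)$; by the identity theorem $\alpha(z)=\overline{\alpha(\bar z)}$ on $\mathcal{S}$, hence $\alpha$ is real on $(0,\infty)$. Then \textbf{H2} gives $|\alpha(t)|\le\alpha_0<1$, so $1-\alpha(t)\ge 1-\alpha_0>0$, $\Gamma(1-\alpha(t))>0$, and $t^{-\alpha(t)}\le t^{\alpha_0}$ for $t\ge 1$, so both the positivity and the polynomial growth of $k$ survive beyond $T$, exactly as your argument requires. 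For fairness, the paper's proof carries the mirror-image implicit assumption: it manipulates $\widehat{\tilde g}(s)$ as a given object, whose existence likewise requires growth control of the extension of $\tilde g$ at infinity, something Lemma \ref{lem-esti-g} only supplies on bounded sets $|z|<K$.
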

\begin{proof}
If it is not true, that is, $\widehat{\tilde g} = -s^{\alpha_0-1}$ for any $\Re s\ge s_0$, where $s_0>0$ is a suitable choice constant. In this case, from the inversion Laplace transform formula it follows that
$$
\tilde g(t) = -\frac{t^{-\alpha_0}}{\Gamma(1-\alpha_0)},
$$
that is
$$
\int_0^t \frac{t^{-\alpha(z)}}{\Gamma(1-\alpha(z))} (\gamma(1-\alpha(z)) \alpha'(z) -\alpha(z) \ln t) dz = -\frac{t^{-\alpha_0}}{\Gamma(1-\alpha_0)}.
$$
It is not difficult to check that $\tilde g(t) \to 0$ ($t\to0$), but the term on the right-hand side $\frac{t^{-\alpha_0}}{\Gamma(1-\alpha_0)} \to \infty$ as $t$ tends to $0$. We get a contradiction. So we must have $s^{\alpha_0-1} + \widehat{\tilde g}(s) \not\equiv0$, which completes the proof of the lemma.
\end{proof}

\begin{theorem}\label{continuation-interior}
Suppose $u$ is a solution to system \eqref{eq-ibvp} satisfying $u(x,t)=0$ for $(x,t)\in\Omega_0\times(0,T)$, where $\Omega_0$ is a subdomain of $\Omega$, then $u(x,t)=0$ in $\Omega\times(0,T)$.
\end{theorem}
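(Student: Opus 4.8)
The plan is to turn the interior vanishing of $u$ into a spectral statement via the Laplace transform, and then strip off the information eigenspace by eigenspace. First I would invoke the $t$-analyticity established in Lemma \ref{wellposedness-ivp}: since $u(\cdot,t)$ extends analytically from $(0,T)$ to $(0,\infty)$ and vanishes on $\Omega_0\times(0,T)$, analytic continuation in $t$ forces $u(x,t)=0$ for all $(x,t)\in\Omega_0\times(0,\infty)$. Consequently its Laplace transform satisfies $\hat u(x,s)=0$ for $x\in\Omega_0$ and every $s$ with $\Re s$ sufficiently large. Substituting this into the series representation derived above,
\begin{align*}
\hat u(x,s) = \left(s^{\alpha_0-1} + \widehat{\tilde g}(s)\right)\sum_{n=1}^\infty \frac{(u_0,\varphi_n)\,\varphi_n(x)}{s^{\alpha_0} + s\widehat{\tilde g}(s) + \lambda_n},
\end{align*}
and dividing out the scalar factor $s^{\alpha_0-1}+\widehat{\tilde g}(s)$, which is not identically zero by the preceding lemma, I obtain that the remaining series vanishes on $\Omega_0$ for all such $s$.

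Next I would introduce the spectral variable $\eta(s):=s^{\alpha_0}+s\widehat{\tilde g}(s)$ and regard $P(\eta):=\sum_n (u_0,\varphi_n)\varphi_n/(\eta+\lambda_n)$ as an $L^2(\Omega)$-valued meromorphic function of $\eta$, whose only singularities are simple poles at $\eta=-\lambda_n$ with residues equal to the spectral projections of $u_0$. Because $\eta(s)$ is analytic and non-constant in $s$, the open mapping theorem guarantees that its image contains an open set $U\subset\mathbb C$, so the previous step yields $P(\eta)|_{\Omega_0}=0$ for $\eta\in U$; analytic continuation in $\eta$ then upgrades this to $P(\eta)|_{\Omega_0}=0$ for every $\eta$ off the poles. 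Taking the residue at each distinct eigenvalue $\mu$ gives $\big(\sum_{\lambda_n=\mu}(u_0,\varphi_n)\varphi_n\big)\big|_{\Omega_0}=0$.

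Finally, each residue $d_\mu:=\sum_{\lambda_n=\mu}(u_0,\varphi_n)\varphi_n$ is a finite combination of Dirichlet eigenfunctions, hence solves $-\Delta d_\mu=\mu d_\mu$ in $\Omega$ and is real-analytic in the interior. Since $d_\mu$ vanishes on the open set $\Omega_0$ and $\Omega$ is connected, real-analyticity forces $d_\mu\equiv 0$ throughout $\Omega$; by orthonormality of $\{\varphi_n\}$ this means $(u_0,\varphi_n)=0$ for every $n$, so $u_0=0$ and hence $\hat u\equiv0$, giving $u\equiv0$ on $\Omega\times(0,T)$. I expect the main obstacle to be the middle step: rigorously justifying that $P(\eta)$ is a genuinely meromorphic $L^2$-valued function with the claimed residues, which requires the eigenvalue asymptotics together with the $H^2$-regularity of $u_0$ to control convergence of the series uniformly on compact subsets away from the poles, and verifying that $\eta(s)$ has open image so that the analytic continuation in $\eta$ is legitimate even though $\widehat{\tilde g}$ is a transcendental, non-elementary function of $s$.
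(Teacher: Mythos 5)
Your proposal is correct and follows essentially the same route as the paper: $t$-analyticity to extend the vanishing to $(0,\infty)$, Laplace transform and the eigenfunction series for $\hat u$, division by the factor $s^{\alpha_0-1}+\widehat{\tilde g}(s)$ (nonvanishing by the preceding lemma), the substitution $\eta=s^{\alpha_0}+s\widehat{\tilde g}(s)$ with analytic continuation in $\eta$, and finally reduction to $u_0=0$ and uniqueness of the forward problem. The only difference is that where the paper cites Sakamoto--Yamamoto for the last step, you carry it out explicitly (residues at $\eta=-\mu$ giving the spectral projections of $u_0$, then unique continuation for Dirichlet eigenfunctions via interior real-analyticity), which is exactly the content of that citation.
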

\begin{proof}
From the $t$-analyticity of the solution $u$ to the system \eqref{eq-ibvp}, we see that $\hat u(x,s)=0$, $x\in\Omega$. Consequently,
$$
\sum_{n=1}^\infty (u_0,\varphi_n) \varphi_n \frac{ s^{\alpha_0-1} + \widehat{\tilde g}(s)}{\lambda_n + s^{\alpha_0} + s\widehat{\tilde g}(s)} = 0,\quad x\in\Omega_0.
$$
Letting $\eta =  s^{\alpha_0} + s\widehat{\tilde g}(s)$, then we get
$$
\sum_{n=1}^\infty (u_0,\varphi_n) \varphi_n \frac{1}{\lambda_n + \eta} = 0,\quad \eta\in \mathcal O,
$$
where $\mathcal O\subset \mathbb C$ is an open set. By using the analytic continuation, we know that the above equation is actually valid for any $\eta\neq -\lambda_n$, that is,
$$
\sum_{n=1}^\infty (u_0,\varphi_n) \varphi_n \frac{1}{\lambda_n + \eta} = 0,\quad x\in\Omega_0.\ \eta\neq -\lambda_n,\ n=1,2,\cdots.
$$
We can follow the argument used in Sakamoto and Yamamoto \cite{SakYam} to get $u_0=0$ in $\Omega$. Finally, from the uniqueness of the initial boundary value problem \eqref{eq-ibvp}, we can obtain $u=0$ in $\Omega\times(0,T)$. The proof is finished.
\end{proof}
We choose any nonempty and open subboundary $\Gamma\subset \partial\Omega$, and follow the above argument used to prove Theorem \ref{continuation-interior}, we can also show the uniqueness of the Cauchy problem in the weak sense.
\begin{theorem}\label{continuation-boundary}
Suppose $u$ is a solution to system \eqref{eq-ibvp} with $F=0$, moreover, we assume $\partial_\nu u(x,t)=0$ for $(x,t)\in\Gamma\times(0,T)$, where $\Gamma$ is an open and nonempty subboundary of $\partial\Omega$, then $u(x,t)=0$ in $\Omega\times(0,T)$.
\end{theorem}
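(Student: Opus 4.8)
The plan is to mirror the proof of Theorem \ref{continuation-interior}, replacing the interior observation by the boundary observation and inserting an elliptic Cauchy-uniqueness argument at the end. First I would use the $t$-analyticity of $u$ granted by Lemma \ref{wellposedness-ivp} to extend $u$ to $(0,\infty)$ and apply the Laplace transform. Since $\partial_\nu u=0$ on $\Gamma\times(0,T)$, analyticity propagates this vanishing to all $t>0$, so that $\partial_\nu\hat u(x,s)=0$ for $x\in\Gamma$ and $\Re s$ large. Differentiating the series representation of $\hat u$ term by term along the outer normal gives
\[
\partial_\nu\hat u(x,s)=\sum_{n=1}^\infty (u_0,\varphi_n)\,\partial_\nu\varphi_n(x)\,\frac{s^{\alpha_0-1}+\widehat{\tilde g}(s)}{\lambda_n+s^{\alpha_0}+s\widehat{\tilde g}(s)}=0,\quad x\in\Gamma.
\]

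Next, exactly as in the interior case, I would set $\eta=s^{\alpha_0}+s\widehat{\tilde g}(s)$ and divide out the scalar factor $s^{\alpha_0-1}+\widehat{\tilde g}(s)$, which is nonzero by the preceding lemma, to obtain $\sum_{n=1}^\infty (u_0,\varphi_n)\,\partial_\nu\varphi_n(x)\,(\lambda_n+\eta)^{-1}=0$ on an open set $\mathcal O$ of the $\eta$-plane. Analytic continuation in $\eta$ then extends this identity to all $\eta\neq-\lambda_n$. Viewing the left side as a meromorphic function of $\eta$ with simple poles at $\eta=-\lambda_k$, I would take the residue at each pole (equivalently, sum over the finite eigenspace of $\lambda_k$) to conclude $\partial_\nu d_k=0$ on $\Gamma$, where $d_k:=\sum_{\lambda_n=\lambda_k}(u_0,\varphi_n)\varphi_n$ is the $L^2$-projection of $u_0$ onto the $\lambda_k$-eigenspace. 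By construction $d_k$ solves $-\Delta d_k=\lambda_k d_k$ in $\Omega$ with $d_k=0$ on $\partial\Omega$, hence in particular $d_k=0$ and $\partial_\nu d_k=0$ on $\Gamma$; that is, $d_k$ has vanishing Cauchy data on the open subboundary $\Gamma$.

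The final step, which I expect to be the main obstacle, is to conclude $d_k\equiv0$ in $\Omega$ from this vanishing Cauchy data. This is precisely where the boundary argument departs from the interior one: in Theorem \ref{continuation-interior} the eigenfunction vanishes on an open subset and analyticity kills it at once, whereas here only Cauchy data on a hypersurface is available. I would invoke Holmgren's uniqueness theorem (or the standard unique continuation property for the Cauchy problem) for the operator $-\Delta-\lambda_k$, whose coefficients are constant and hence analytic: vanishing Dirichlet and Neumann traces on $\Gamma$ force $d_k=0$ in a one-sided neighborhood of $\Gamma$ inside $\Omega$, and the connectedness of $\Omega$ then propagates the vanishing to all of $\Omega$. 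Care must be taken that $\Gamma$ lies on a smooth portion of $\partial\Omega$ so that the Cauchy data is well defined, which is guaranteed by the piecewise-smooth, convex-corner assumption on $\partial\Omega$.

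Once $d_k=0$ for every eigenvalue $\lambda_k$, completeness of $\{\varphi_n\}$ forces $u_0=0$ in $\Omega$. Finally, the uniqueness of the initial boundary value problem \eqref{eq-ibvp} from Lemma \ref{wellposedness-ivp} gives $u\equiv0$ in $\Omega\times(0,T)$, completing the proof.
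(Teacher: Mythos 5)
Your proposal is correct and takes essentially the same route the paper intends: the paper proves this theorem in a single sentence by instructing the reader to repeat the argument of Theorem \ref{continuation-interior}, whose final step is itself deferred to Sakamoto--Yamamoto, and your residue computation at $\eta=-\lambda_k$ followed by Holmgren/unique continuation for $-\Delta-\lambda_k$ with vanishing Cauchy data on $\Gamma$ is exactly the detail that reference supplies for boundary observations. If anything, your write-up is more complete than the paper's, since it makes explicit both the eigenspace projections $d_k$ and the elliptic Cauchy-uniqueness step where the boundary case genuinely differs from the interior one.
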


\section{\large Uniqueness and conditional stability for inverse source problem}\label{sec3}
In this section, we consider 
\begin{equation}\label{eq-isp}
\begin{cases}
^c \partial_t^{\alpha_0} u - \Delta u + \tilde g' * u = f(x)\beta(t), & (x,t)\in\Omega_T,\\
u(x,0) = 0, & x\in\Omega,\\
u(x,t) = 0, & (x,t)\in\partial\Omega\times(0,T).
\end{cases}
\end{equation}
Suppose that the temporal component $\beta$ is known, while $f$ is unknown to be recovered. The inverse problems are to identify the unknown $f$ in (\ref{eq-isp}) from the partial interior observation $u|_{\Omega_0\times(0,T)},\,\Omega_0\subset\Omega$ or the partial boundary observation $\partial_\nu u|_{\Gamma\times(0,T)},\,\Gamma\subset\partial\Omega$, respectively. In what follows, we will state the uniqueness and conditional stability of these two inverse source problems.

\subsection{Uniqueness of solution}
We need the following Duhamel's principle.
\begin{lemma}
The solution $u$ to the system \eqref{eq-isp} with $\beta\in W^{1,1}(0,T)$ can be represented as follows
$$
u(t) = \int_0^t \theta(t-\tau) v(\tau) d\tau=\theta*v(t),
$$
where $\theta$ is such that $J^{1-\alpha_0} \theta = \beta$, and $v$ is the solution to the initial-boundary value problem 
\begin{equation*}
\begin{cases}
^c\partial_t^{\alpha_0} v - \Delta v + \tilde g' * v = \tilde g f, & (x,t)\in\Omega_T,\\
v(x,0) = f(x),& x\in\Omega,\\
v(x,t) = 0, & (x,t)\in\partial\Omega\times(0,T).
\end{cases}
\end{equation*}
\end{lemma}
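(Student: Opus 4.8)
The plan is to verify directly, by substitution, that the temporal convolution $u:=\theta*v$ satisfies the three conditions of \eqref{eq-isp}. The homogeneous boundary condition is inherited from $v$ (since $\theta$ is purely temporal), and the initial condition $u(\cdot,0)=(\theta*v)(\cdot,0)=0$ holds because the convolution integral over $[0,0]$ vanishes. Hence all the content lies in reproducing the governing equation with the correct right-hand side $f\beta$.

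First I would record the differentiation rule for the convolution. Writing $(\theta*v)(t)=\int_0^t\theta(s)\,v(t-s)\,ds$ and using $v(\cdot,0)=f$, one obtains $\partial_t(\theta*v)=f\,\theta+\theta*\partial_t v$. Applying the Riemann--Liouville integral $J_{0+}^{1-\alpha_0}$, which is itself convolution against $\beta_{1-\alpha_0}(t)=t^{-\alpha_0}/\Gamma(1-\alpha_0)$, and invoking commutativity and associativity of convolution, gives
\[
{}^c\partial_t^{\alpha_0}u=J_{0+}^{1-\alpha_0}\partial_t(\theta*v)=f\,(\beta_{1-\alpha_0}*\theta)+\theta*{}^c\partial_t^{\alpha_0}v .
\]
Because $\theta$ depends only on $t$, the spatial operator and the memory kernel pass through the convolution, so that $\Delta u=\theta*\Delta v$ and $\tilde g'*u=\theta*(\tilde g'*v)$.

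Next I would collect these identities. Substituting into the left-hand side of \eqref{eq-isp} and using that $v$ solves its own system, whose source is $\tilde g f$, yields
\[
{}^c\partial_t^{\alpha_0}u-\Delta u+\tilde g'*u
=f\,(\beta_{1-\alpha_0}*\theta)+\theta*\bigl({}^c\partial_t^{\alpha_0}v-\Delta v+\tilde g'*v\bigr)
=f\,(\beta_{1-\alpha_0}*\theta)+f\,(\theta*\tilde g).
\]
Recalling the kernel splitting $k=\beta_{1-\alpha_0}+\tilde g$ from \eqref{hh3}, the two contributions combine into $f\,(k*\theta)$: the leading Riemann--Liouville part is produced by the fractional derivative, while the memory part $\tilde g*\theta$ is supplied jointly by the perturbation term $\tilde g'*u$ and by the source $\tilde g f$ carried along by $v$. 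Thus the right-hand side reduces to $f\beta$ exactly when $\theta$ is chosen so that its convolution against the \emph{full} variable-exponent kernel satisfies $k*\theta=\beta$ (whose leading component is precisely the relation $J_{0+}^{1-\alpha_0}\theta=\beta$). This is the crux of the argument, and the main obstacle: one must track the memory terms carefully and check that the contribution $\tilde g*\theta$ from the perturbation and the source does not spoil the identity but rather completes the leading Riemann--Liouville part into the full kernel $k$.

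A secondary, purely technical point is regularity. With $\beta\in W^{1,1}(0,T)$ the Abel-type equation determining $\theta$ admits a solution $\theta\in L^1(0,T)$, and the estimates of Lemma \ref{lem-esti-g}, giving $\tilde g,\tilde g'\in L^1(0,T)$, together with the solution regularity from Lemma \ref{wellposedness-ivp}, justify the Fubini interchanges, the convolution-differentiation formula, and the passage of $\Delta$ and $\tilde g'*$ through the temporal convolution used above. Once these manipulations are legitimized, the substitution computation above closes the proof.
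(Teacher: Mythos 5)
Your overall strategy --- verify by substitution that $\theta*v$ satisfies the system and invoke uniqueness of the forward problem --- is exactly the paper's, and your computation is in fact carried out more carefully than the paper's own proof. Retaining the source $\tilde g f$ of the $v$-problem, one gets, as you did,
\begin{equation*}
{}^c\partial_t^{\alpha_0}(\theta*v)-\Delta(\theta*v)+\tilde g'*(\theta*v)
= f\,\bigl(J_{0+}^{1-\alpha_0}\theta\bigr)+f\,\bigl(\tilde g*\theta\bigr)
= f\,(k*\theta),
\end{equation*}
with $k=\beta_{1-\alpha_0}+\tilde g$ as in \eqref{hh3}. The paper's proof silently drops the second term: its final display equates $J_{0+}^{1-\alpha_0}\theta\,v(0)+\theta*\bigl[{}^c\partial_t^{\alpha_0}v-\Delta v+\tilde g'*v\bigr]$ with $\beta f$, although by the $v$-equation the bracket equals $\tilde g f$, not $0$.

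The genuine gap in your write-up is the final reconciliation step, where you assert that the required condition $k*\theta=\beta$ is met because its ``leading component'' is the stated relation $J_{0+}^{1-\alpha_0}\theta=\beta$. These are two different Volterra equations; their solutions differ unless $\tilde g*\theta\equiv0$, which fails whenever $\tilde g\not\equiv0$ (i.e.\ the exponent is genuinely variable) and $\beta\not\equiv0$. Thus, with $\theta$ defined by $J_{0+}^{1-\alpha_0}\theta=\beta$ as in the statement, your own computation shows that $\theta*v$ solves the equation with right-hand side $f\,(\beta+\tilde g*\theta)\neq f\beta$ --- it disproves the lemma as stated rather than proves it, and the mismatch is not yours: it is an inconsistency in the statement, which your calculation exposes. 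The repair your computation already contains is to define $\theta$ by the full-kernel equation $k*\theta=\beta$, equivalently $J_{0+}^{1-\alpha_0}\theta+\tilde g*\theta=\beta$ (solvable by treating $\tilde g*$ as a Volterra perturbation of the Abel operator), keeping the $v$-problem as stated; the downstream uniqueness proof of Theorem \ref{uniqueness-interior} survives, since from $\theta*v=0$ on $\Omega_0\times(0,T)$ one convolves with $k$ (instead of applying $J_{0+}^{1-\alpha_0}$) to obtain $\beta*v=0$ and then applies the Titchmarsh theorem as before. The alternative repair --- keep $J_{0+}^{1-\alpha_0}\theta=\beta$ but let $v$ solve the transformed system with zero right-hand side --- would break the subsequent application of Theorem \ref{continuation-interior}, which is formulated for the system with source $\tilde g u_0$. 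In short: you located the crux precisely, but waving it away as a ``leading component'' is not a valid proof step.
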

\begin{proof}
The initial and boundary conditions are obviously met. We only need to check that the right-hand side of the representation formula $\theta*v$ satisfies the initial boundary value problem \eqref{eq-ibvp} in view of the uniqueness of the forward problem.
$$
^c \partial_t^{\alpha_0}u  = J^{1-\alpha_0} \partial_t \int_0^t \theta(\tau) v(t-\tau) d\tau.
$$
A direct calculation yields
$$
^c \partial_t^{\alpha_0} = J^{1-\alpha_0} \theta(t) v(0) + \int_0^t \theta(\tau) {^c \partial_t^{\alpha_0}v}(t-\tau) d\tau.
$$
On the other hand, from the properties of the convolution we see that
$$
\tilde g'*u = \tilde g'*(\theta*v) = \theta*(\tilde g'*v) = \int_0^t \theta(\tau) \tilde g'* v(t-\tau) d\tau.
$$
Collecting all the above results, it follows that
$$
\begin{aligned}
&^c\partial_t^{\alpha_0} u - \Delta u + \tilde g'*u 
\\
=& J^{1-\alpha_0}\theta(t) v(0) + \int_0^t \theta(\tau)[{^c \partial_t^{\alpha_0}v - \Delta v + \tilde g'*v}] (t-\tau ) d\tau =\beta(t) f(x).
\end{aligned}
$$
We finish the proof of the lemma.
\end{proof}

\begin{theorem}\label{uniqueness-interior}
Suppose $\beta\neq 0$, $\beta\in W^{1,1}(0,T)$, $u$ is a solution to the system \eqref{eq-isp} satisfying $u(x,t)=0$ for $(x,t)\in\Omega_0\times(0,T)$, where $\Omega_0$ is a subdomain of $\Omega$, then $f(x)=0$ in $\Omega$.
\end{theorem}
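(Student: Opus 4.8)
The plan is to combine Duhamel's principle from the preceding lemma with the weak unique continuation of Theorem \ref{continuation-interior}. First I would invoke Duhamel's principle to write $u=\theta*v$, where $\theta$ is the temporal kernel determined by $J^{1-\alpha_0}\theta=\beta$ and $v$ solves the homogeneous problem \eqref{eq-F=0} with the initial datum $f$ in place of $u_0$. The vanishing hypothesis $u=0$ on $\Omega_0\times(0,T)$ then reads $\theta*v(x,\cdot)=0$ on $(0,T)$ for every fixed $x\in\Omega_0$, so the crux is to deconvolve this identity and transfer the vanishing from $u$ to $v$; once $v=0$ on $\Omega_0\times(0,T)$ is known, the problem reduces to one already solved.

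For the deconvolution I would exploit the $t$-analyticity supplied by Lemma \ref{wellposedness-ivp}: since the source term vanishes in the problem for $v$, the solution $v:(0,T)\to L^2(\Omega)$ extends analytically to $(0,\infty)$, and the same holds for $u$. Hence $u$ and $v$ admit Laplace transforms, and the convolution identity becomes $\widehat u(x,s)=\widehat\theta(s)\,\widehat v(x,s)=0$ for $x\in\Omega_0$. Taking the Laplace transform of $J^{1-\alpha_0}\theta=\beta$ gives $\widehat\theta(s)=s^{1-\alpha_0}\widehat\beta(s)$, and because $\beta\neq0$ forces $\widehat\beta\not\equiv0$, the factor $\widehat\theta$ is a nonzero analytic function of $s$. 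For each fixed $x\in\Omega_0$ the product of the analytic functions $\widehat\theta(s)$ and $\widehat v(x,s)$ vanishes identically in $s$; since $\widehat\theta\not\equiv0$ it is nonzero on a dense open subset of the half-plane $\Re s>0$, on which $\widehat v(x,\cdot)$ must vanish, so by analyticity $\widehat v(x,\cdot)\equiv0$. Inverting the Laplace transform then yields $v(x,t)=0$ for all $t>0$, whence $v$ itself vanishes on $\Omega_0\times(0,T)$.

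With $v=0$ on $\Omega_0\times(0,T)$ in hand, $v$ is a solution of the homogeneous system \eqref{eq-F=0} (i.e.\ \eqref{eq-ibvp} with $F\equiv0$) whose initial value is $f$, so Theorem \ref{continuation-interior} applies verbatim. Its proof first deduces that the relevant initial datum must vanish, which here means precisely $f=0$ in $\Omega$, completing the argument.

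I expect the deconvolution step to be the main obstacle. One must justify that $\theta*v=0$ genuinely forces $v=0$ rather than merely constraining the left endpoints of supports; the analytic route above circumvents the support bookkeeping that a direct appeal to Titchmarsh's convolution theorem would require, but it leans on the analytic extension to $(0,\infty)$ from Lemma \ref{wellposedness-ivp} and on adequate Laplace-transformability (at most exponential growth) of $u$ and $v$, which should be read off from the Mittag-Leffler bounds underlying the series representation for $S_1,S_2$. A secondary point to verify is that $\widehat\theta(s)=s^{1-\alpha_0}\widehat\beta(s)$, being analytic and not identically zero, has only isolated zeros, so that the cancellation $\widehat\theta\,\widehat v=0$ legitimately yields $\widehat v(x,\cdot)\equiv0$ for each $x\in\Omega_0$.
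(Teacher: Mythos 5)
Your overall architecture (Duhamel's decomposition $u=\theta*v$, a deconvolution step to transfer the vanishing from $u$ to $v$, then Theorem \ref{continuation-interior} applied to $v$ to conclude $f=v(\cdot,0)=0$) coincides with the paper's. But the deconvolution step, which you yourself flag as the main obstacle, contains a genuine gap: the claim that the analyticity of $v$ ``also holds for $u$'' is false. The solution $u$ of \eqref{eq-isp} is driven by the source $f(x)\beta(t)$ with $\beta$ merely in $W^{1,1}(0,T)$; nothing makes $\beta$, and hence $u$, analytic in $t$, and $\theta$ (determined by $J^{1-\alpha_0}\theta=\beta$) is defined only on $(0,T)$. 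Consequently the hypothesis $u=0$ on $\Omega_0\times(0,T)$ cannot be propagated to $(0,\infty)$, the Laplace transforms $\widehat u$ and $\widehat\theta$ of functions known only on a finite window are not available, and the identity $\widehat u(x,s)=\widehat\theta(s)\,\widehat v(x,s)=0$ for $x\in\Omega_0$ --- the heart of your argument --- is unjustified. (Extending $\beta$ by zero past $T$ does not help: the observation gives no information about $u$ on $\Omega_0\times(T,\infty)$, so $\widehat u$ need not vanish.) This is precisely where the present theorem differs from Theorem \ref{continuation-interior}: there the equation is homogeneous, the solution is $t$-analytic by Lemma \ref{wellposedness-ivp}, and the Laplace method is legitimate; here only $v$, not $u$, enjoys that property.

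The repair is to deconvolve on the finite interval, which is exactly what the paper does and what you tried to circumvent: apply $J_{0+}^{1-\alpha_0}$ to the identity $\theta*v=0$ on $\Omega_0\times(0,T)$ and use $J^{1-\alpha_0}\theta=\beta$ to obtain $\beta*v=0$ on $\Omega_0\times(0,T)$; then invoke the Titchmarsh convolution theorem, which is a finite-interval statement: since $\beta\not\equiv0$, the infimum $a$ of the support of $\beta$ satisfies $a<T$, and Titchmarsh forces $v(x,\cdot)=0$ on $(0,T-a)$ for each $x\in\Omega_0$. The ``support bookkeeping'' is then closed by the $t$-analyticity of $v$ (Lemma \ref{wellposedness-ivp}), which upgrades vanishing on $(0,T-a)$ to vanishing on all of $(0,T)$. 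From there your final step is correct: Theorem \ref{continuation-interior} gives $v\equiv0$, hence $f=v(\cdot,0)=0$.
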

\begin{proof}
From the Duhamel principle in the above lemma, it follows that
$$
u(t) = \int_0^t \theta(t-\tau) v(\tau) d\tau=\theta*v(t),
$$
moreover, by using the observation data $u(x,t)=0$, $(x,t)\in \Omega_0\times(0,T)$, we have
$$
0 = \int_0^t \theta(t-\tau) v(\tau) d\tau,\quad (x,t)\in\Omega_0\times(0,T).
$$
Multiplying $J^{1-\alpha_0}$ on both sides of the above equation, and noting $J^{1-\alpha_0}\theta = \beta$ derives that
$$
0=\int_0^t \beta(t-\tau) v(\tau) d\tau = 0,\quad (x,t)\in\Omega_0\times(0,T).
$$
Since $\beta\neq0$, we see from Titchmarsh convolution theorem that $v$ must be vanished in $\Omega_0\times(0,T)$. Therefore, noting the weak unique continuation principle in Theorem \ref{continuation-interior}, we finally see that $v\equiv0$, which implies $f=v(\cdot,0)=0$. We finish the proof of the lemma.
\end{proof}
By making minor modifications to the proof of Theorem \ref{uniqueness-interior} and incorporating Theorem \ref{continuation-boundary}, we can demonstrate the uniqueness of the inversion source term by using the flux data $\partial_\nu u$ on the subboundary $\Gamma\subset \partial\Omega$.
\begin{theorem}\label{thm-unique'}
Suppose $\beta\neq 0$, $\beta\in W^{1,1}(0,T)$, and let $u$ be a solution to the system \eqref{eq-isp} satisfying $\partial_\nu u(x,t)=0$ for $(x,t)\in\Gamma\times(0,T)$, where $\Gamma$ is a nonempty open subboundary of $\partial\Omega$, then $f(x)=0$ in $\Omega$.
\end{theorem}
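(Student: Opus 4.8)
The plan is to mirror the proof of Theorem \ref{uniqueness-interior} closely, replacing the interior observation by the boundary flux observation and Theorem \ref{continuation-interior} by Theorem \ref{continuation-boundary}. First I would invoke Duhamel's principle (the lemma preceding Theorem \ref{uniqueness-interior}) to write the solution of \eqref{eq-isp} as $u(t)=\theta*v(t)$, where $J^{1-\alpha_0}\theta=\beta$ and $v$ solves \eqref{eq-ibvp} with $F=0$ and initial datum $u_0=f$. By Lemma \ref{wellposedness-ivp}, $v$ lies in $L^p(0,T;H^2(\Omega)\cap H^1_0(\Omega))$, so its normal-derivative trace $\partial_\nu v$ is well defined, say in $L^p(0,T;H^{1/2}(\partial\Omega))$.

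Next, since taking the normal-derivative trace is a bounded linear operation acting only in the spatial variable, it commutes with the temporal convolution, so
$$
\partial_\nu u(x,t)=\theta*(\partial_\nu v)(x,t),\quad (x,t)\in\Gamma\times(0,T).
$$
The hypothesis $\partial_\nu u=0$ on $\Gamma\times(0,T)$ gives $\theta*(\partial_\nu v)=0$ there; applying $J^{1-\alpha_0}$ and using $J^{1-\alpha_0}\theta=\beta$ yields
$$
\beta*(\partial_\nu v)(x,t)=0,\quad (x,t)\in\Gamma\times(0,T).
$$
Fixing $x\in\Gamma$ and reading this as an identity of temporal convolutions on $(0,T)$, the Titchmarsh convolution theorem together with $\beta\neq0$ forces $\partial_\nu v(x,\cdot)=0$ on $(0,T)$; as $x\in\Gamma$ is arbitrary, $\partial_\nu v=0$ on $\Gamma\times(0,T)$. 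Now $v$ is a solution of \eqref{eq-ibvp} with $F=0$ whose normal flux vanishes on $\Gamma\times(0,T)$, so the boundary weak unique continuation principle of Theorem \ref{continuation-boundary} gives $v\equiv0$ in $\Omega\times(0,T)$, and evaluating at $t=0$ yields $f=v(\cdot,0)=0$ in $\Omega$.

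The step requiring genuine care, and the one I expect to be the main obstacle, is justifying the commutation $\partial_\nu(\theta*v)=\theta*(\partial_\nu v)$ and the well-definedness of the pointwise-in-time trace $\partial_\nu v$. This rests on the spatial $H^2$-regularity of $v$ furnished by Lemma \ref{wellposedness-ivp}, which lets the trace theorem apply for almost every $t$ and permits interchanging the temporal convolution with the spatial trace via Fubini's theorem; the remaining steps transcribe the interior argument of Theorem \ref{uniqueness-interior} word for word.
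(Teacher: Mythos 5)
Your proposal is correct and follows exactly the route the paper intends: the paper gives no separate proof of Theorem \ref{thm-unique'}, stating only that it follows by ``minor modifications'' of the proof of Theorem \ref{uniqueness-interior} together with Theorem \ref{continuation-boundary}, and your chain (Duhamel representation $u=\theta*v$, passing $\partial_\nu$ through the convolution, applying $J^{1-\alpha_0}$ to reduce to $\beta*(\partial_\nu v)=0$ on $\Gamma\times(0,T)$, Titchmarsh, then boundary weak unique continuation to get $v\equiv0$ and $f=v(\cdot,0)=0$) is precisely that modification. Your explicit justification of the commutation $\partial_\nu(\theta*v)=\theta*(\partial_\nu v)$ via the $H^2$-spatial regularity from Lemma \ref{wellposedness-ivp} supplies a technical detail the paper leaves implicit.
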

\subsection{Conditional stability of Lipschitz type by partial interior observation}

Firstly, we introduce the adjoint system of \eqref{eq-isp} as
\begin{eqnarray}\label{prob-adjoint}
\begin{cases}
D_{T-}^{\alpha_0}{\phi} - \Delta \phi + \tilde g' \star \phi=\chi_{\Omega_0}\omega(x,t), &(x,t)\in \Omega_T, \\
\phi(x,t)=0, &(x,t)\in \partial\Omega\times[0,T],\\
J_{T-}^{1-\alpha_0}\phi(x,T) = 0, &x\in \Omega.
\end{cases}
\end{eqnarray}
Here $\tilde g' \star \phi:= \int_t^T \tilde g'(\tau-t) \phi(\tau) d\tau$, $\chi_{\Omega_0}$ denotes the characterization function of $\Omega_0$. Similarly as in Lemma \ref{wellposedness-ivp}, for $\omega\in W^{1,1}(0,T;L^2(\Omega_0))$, one can prove that the adjoint problem (\ref{prob-adjoint}) admits a unique solution in $W^{1,p}(0,T;L^2(\Omega))\cap L^p(0,T;H^2(\Omega)\cap H_0^1(\Omega))$ for $1<p<\frac{1}{1-\alpha_0}$. The main idea to prove that is to transfer the backward problem to a forward problem and employ the argument for the forward problem to prove the uniqueness and existence. First,  in view of the notations in Section 2.1, by changing the variable, we can rewrite the backward Riemann-Liouville fractional derivative as follows:
$$
D_{T-}^{\alpha_0} \phi(t) = \frac1{\Gamma(1-\alpha_0)} \frac{ \mathrm{d} }{ \mathrm{d} t}\int_0^{T-t} (T-t-\tau)^{-\alpha_0} \phi'(T-\tau) d\tau.
$$
Moreover, by setting $\phi_T(t):=\phi(T-t)$ and defining a new variable $\tilde t:=T-t$, we obtain that
$$
D_{T-}^{\alpha_0} \phi(t) = \frac1{\Gamma(1-\alpha_0)} \frac{ \mathrm{d} }{ \mathrm{d} \tilde t}\int_0^{\tilde t}  \left(\tilde t-\tau \right)^{1-\alpha_0} \phi_T'(\tau) d\tau = D_{\tilde t}^{\alpha_0} \phi_T(\tilde t).
$$
Similarly, under the above settings and treatment, we can show that $\widetilde g'*\phi(t)$ becomes
$$
\widetilde g'*\phi(t) = \widetilde g'* \phi_T(\tilde t).
$$
Now  the backward problem can be rewritten by
\begin{align*}
\begin{cases}
D_{t}^{\alpha_0}{\phi_T} - \Delta \phi_T + \tilde g' * \phi_T = \chi_{\Omega_0}\omega(x,T-t), &(x,t)\in \Omega_T, \\
\phi_T(x,t)=0, &(x,t)\in \partial\Omega\times[0,T],\\
\lim_{t\to0+}J^{1-\alpha_0}\phi_T(x,t) = 0, &x\in \Omega.
\end{cases}
\end{align*}
It is not difficult to check that $\chi_{\Omega_0}w(x,T-t)$ is in the space $W^{1,1}(0,T;L^2(\Omega))$, therefore by an argument similar to Lemma \ref{wellposedness-ivp} we can prove our above assertion. Then we can give the variational identity below, which will be applied to establish the Lipschitz stability by some weak norm to our inverse problem. Before that, we remark that there holds the integration by parts \cite{Jinbook}: 
\begin{lemma}\label{lem-igbp}
Let $\alpha_0>0$, $p\ge 1$, $q\ge 1$, and $1/p+1/q\leq 1+\alpha_0$ ($p\neq 1$ and $q\neq 1$ in the case where $1/p+1/q=1+\alpha_0$). If $u\in L^p(0,T)$ and $v\in L^q(0,T)$, then
\begin{eqnarray*}
\int\limits_0^T {u(t)}J_{0+}^{\alpha_0}v(t) dt = \int\limits_0^T {J_{T-}^{\alpha_0}u(t)}v(t) dt.
\end{eqnarray*}
\end{lemma}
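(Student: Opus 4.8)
The plan is to reduce the identity to a single application of Fubini's theorem after inserting the integral representations of the two fractional integral operators, with the hypotheses on $p$ and $q$ serving only to license the interchange of the order of integration.

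First I would write out the left-hand side explicitly. Using the definition of $J_{0+}^{\alpha_0}$,
\[
\int_0^T u(t)\, J_{0+}^{\alpha_0} v(t)\, dt = \frac{1}{\Gamma(\alpha_0)} \int_0^T \int_0^t u(t)(t-s)^{\alpha_0-1} v(s)\, ds\, dt,
\]
so the integrand is supported on the triangular region $\{(s,t):0<s<t<T\}$. The next step is to swap the order of integration over this region, rewriting $\int_0^T\!\int_0^t \cdots\, ds\, dt = \int_0^T\!\int_s^T \cdots\, dt\, ds$, and then to recognize the inner $t$-integral: for fixed $s$,
\[
\frac{1}{\Gamma(\alpha_0)} \int_s^T (t-s)^{\alpha_0-1} u(t)\, dt = J_{T-}^{\alpha_0} u(s).
\]
This identifies the double integral with $\int_0^T J_{T-}^{\alpha_0} u(s)\, v(s)\, ds$, which is exactly the right-hand side after relabeling the dummy variable, completing the formal computation.

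The only substantive point, and the step I expect to be the main obstacle, is justifying the use of Fubini's theorem, i.e. verifying the absolute integrability
\[
\int_0^T\!\int_0^t |u(t)|(t-s)^{\alpha_0-1} |v(s)|\, ds\, dt = \Gamma(\alpha_0)\int_0^T |u(t)|\, (J_{0+}^{\alpha_0}|v|)(t)\, dt < \infty.
\]
By H\"older's inequality this is bounded by $\Gamma(\alpha_0)\,\|u\|_{L^p(0,T)}\,\|J_{0+}^{\alpha_0}|v|\|_{L^{p'}(0,T)}$ with $1/p+1/p'=1$, so it suffices to establish the mapping property $J_{0+}^{\alpha_0}: L^q(0,T)\to L^{p'}(0,T)$. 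This is a Hardy--Littlewood--Sobolev (equivalently, Young convolution) estimate for the kernel $t^{\alpha_0-1}$: on the bounded interval $(0,T)$ one has $J_{0+}^{\alpha_0}: L^q \to L^r$ continuously whenever $1/r \ge 1/q - \alpha_0$, which upon setting $r=p'$ translates into $1/p+1/q \le 1+\alpha_0$, precisely the stated hypothesis.

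Finally I would treat the endpoint. When $1/p+1/q = 1+\alpha_0$ the Young/HLS estimate is at its borderline exponent, where $J_{0+}^{\alpha_0}$ lands in $L^{p'}$ only if $p'<\infty$, and symmetrically one needs $q'<\infty$; this is exactly why the hypothesis excludes $p=1$ and $q=1$ in the equality case. With the interchange justified on this full range, the formal computation above becomes rigorous and the lemma follows. Since this is the classical fractional integration-by-parts formula, the argument may alternatively be quoted verbatim from the reference cited in the statement.
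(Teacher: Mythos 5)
Your proof is correct, but there is nothing in the paper to compare it against: the paper does not prove this lemma at all, stating it as a known fact quoted from \cite{Jinbook}. Your write-up supplies the standard argument that such a citation points to, and it is sound. The skeleton --- insert the kernel, apply Fubini on the triangle $\{0<s<t<T\}$, recognize the inner $t$-integral as $J_{T-}^{\alpha_0}u(s)$, and justify the interchange by H\"older together with the mapping property $J_{0+}^{\alpha_0}:L^q(0,T)\to L^{p'}(0,T)$ --- is exactly right, and your exponent bookkeeping ($1/p'\ge 1/q-\alpha_0$ if and only if $1/p+1/q\le 1+\alpha_0$) is accurate. One point of precision: at the borderline $1/p+1/q=1+\alpha_0$ the Young convolution inequality genuinely fails, since the required kernel exponent would be $s=1/(1-\alpha_0)$ and $t^{\alpha_0-1}\notin L^{1/(1-\alpha_0)}(0,T)$; what you invoke there must be the Hardy--Littlewood--Sobolev inequality proper (equivalently, weak-type Young plus Marcinkiewicz interpolation), which holds precisely when the source exponent satisfies $q>1$ and the target exponent satisfies $p'<\infty$. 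This is exactly the origin of the exclusions $p\neq 1$ and $q\neq 1$ in the equality case, as you correctly observe, so the slight conflation of ``Young'' with ``HLS'' is a matter of wording rather than a gap.
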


Then, by the solution regularity for the problems \eqref{eq-isp} and \eqref{prob-adjoint} we have the following lemma.
\begin{lemma}\label{lem-var}
Suppose $\beta\in W^{1,1}(0,T)$. Let $u[f]$ be the solution to \eqref{eq-isp} with respect to the source function $f\in L^2(\Omega)$. There holds the identity
\begin{eqnarray}\label{var-identity}
\int_{\Omega_T}\beta(t)f(x)\phi[\omega](x,t) dxdt =
\int_{\Omega_0\times(0,T)}u[f](x,t)\omega(x,t) dx dt,
\end{eqnarray}
where $\phi[\omega](x,t)$ is the solution to \eqref{prob-adjoint} corresponding to $\omega\in W^{1,1}(0,T;L^2(\Omega_0))$.
\end{lemma}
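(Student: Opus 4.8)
The plan is to establish \eqref{var-identity} by pairing the forward problem \eqref{eq-isp} against the adjoint solution $\phi[\omega]$ and transferring each spatial and temporal operator from $u$ onto $\phi$ by integration by parts, using Lemma \ref{lem-igbp} for the fractional derivative and Green's formula for the Laplacian. Concretely, I would multiply the governing equation of \eqref{eq-isp} by $\phi$ and integrate over $\Omega_T$, so that the source $f(x)\beta(t)$ immediately produces $\int_{\Omega_T}\beta(t)f(x)\phi\,dx\,dt$, the left member of \eqref{var-identity}. It then remains to show that $\int_{\Omega_T}({^c\partial_t^{\alpha_0}}u-\Delta u+\tilde g'*u)\phi\,dx\,dt$ collapses to $\int_{\Omega_0\times(0,T)}u\omega\,dx\,dt$ once the operators are moved onto $\phi$ and the adjoint equation of \eqref{prob-adjoint} is invoked.

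First I would treat the three terms on the left separately. For the fractional time term, writing ${^c\partial_t^{\alpha_0}}u=J_{0+}^{1-\alpha_0}u'$ and exploiting the zero initial value $u(x,0)=0$, Lemma \ref{lem-igbp} (with its order taken to be $1-\alpha_0$) moves $J_{0+}^{1-\alpha_0}$ across the pairing to give $\int_0^T u'\,(J_{T-}^{1-\alpha_0}\phi)\,dt$; an ordinary integration by parts in $t$ then yields $\int_0^T u\,D_{T-}^{\alpha_0}\phi\,dt$, where the endpoint contributions vanish precisely because $u(\cdot,0)=0$ and the terminal condition $J_{T-}^{1-\alpha_0}\phi(\cdot,T)=0$ is built into \eqref{prob-adjoint}. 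For the elliptic term, Green's formula with the homogeneous Dirichlet conditions on both $u$ and $\phi$ gives $-\int_{\Omega_T}(\Delta u)\phi=-\int_{\Omega_T}u\,\Delta\phi$ with no boundary flux. For the convolution term, Fubini's theorem on the simplex $0\le s\le t\le T$ converts the forward convolution into the backward one, namely $\int_0^T(\tilde g'*u)\phi\,dt=\int_0^T u\,(\tilde g'\star\phi)\,dt$, matching the operator $\tilde g'\star\phi$ in the adjoint system.

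Assembling the three identities, the left-hand side becomes $\int_{\Omega_T}u\,[D_{T-}^{\alpha_0}\phi-\Delta\phi+\tilde g'\star\phi]\,dx\,dt$, and substituting the adjoint equation $D_{T-}^{\alpha_0}\phi-\Delta\phi+\tilde g'\star\phi=\chi_{\Omega_0}\omega$ reduces it to $\int_{\Omega_T}u\,\chi_{\Omega_0}\omega\,dx\,dt=\int_{\Omega_0\times(0,T)}u\omega\,dx\,dt$, which is exactly the claimed right-hand side.

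The main obstacle I anticipate is not the formal computation but the rigorous justification of these manipulations under the available regularity. The forward and adjoint solutions lie only in $W^{1,p}(0,T;L^2(\Omega))\cap L^p(0,T;H^2(\Omega)\cap H_0^1(\Omega))$ for $1<p<\frac1{1-\alpha_0}$, so one must verify that the integrability hypothesis of Lemma \ref{lem-igbp}, which for the order $1-\alpha_0$ reads $1/p+1/q\le 2-\alpha_0$, holds for the relevant pairing (it does, since $1/p$ and $1/q$ may each be taken just above $1-\alpha_0$, so their sum is just above $2-2\alpha_0\le 2-\alpha_0$), that the weakly singular kernels $\tilde g,\tilde g'$ controlled in Lemma \ref{lem-esti-g} keep every integral absolutely convergent so that Fubini and the change of integration order apply, and that the endpoint terms in the temporal integration by parts are well-defined and vanish in the trace sense. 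Once these integrability checks are in place, the identity follows, first for smooth data and then for general $f\in L^2(\Omega)$ and $\omega\in W^{1,1}(0,T;L^2(\Omega_0))$ by density and the continuous dependence of $u[f]$ and $\phi[\omega]$ on their data.
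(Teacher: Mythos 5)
Your proposal is correct and follows essentially the same route as the paper's own proof: multiply \eqref{eq-isp} by $\phi[\omega]$, transfer $^c\partial_t^{\alpha_0}$ onto $\phi$ via Lemma \ref{lem-igbp} and a temporal integration by parts (with the endpoint terms killed by $u(\cdot,0)=0$ and $J_{T-}^{1-\alpha_0}\phi(\cdot,T)=0$), move $-\Delta$ and the convolution by Green's formula and Fubini, and then invoke the adjoint equation \eqref{prob-adjoint}. Your explicit check of the exponent condition $1/p+1/q\le 2-\alpha_0$ in Lemma \ref{lem-igbp} is a detail the paper leaves implicit, and is a welcome addition rather than a deviation.
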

\begin{proof}
Multiplying two sides of the first equation in \eqref{eq-isp} by $\phi(x,t):=\phi[\omega](x,t)$ and integrating in $\Omega_T$, we get
\begin{eqnarray*}
\int_{\Omega_T} (^c \partial_t^{\alpha_0} u - \Delta u + \tilde g' * u)(x,t)\phi(x,t) dxdt=
\int_{\Omega_T} \beta(t) f(x)\phi(x,t) dxdt.
\end{eqnarray*}
Under the given conditions, we know $u, \phi\in W^{1,p}(0,T;L^2(\Omega))\cap L^p(0,T;H^2(\Omega)\cap H_0^1(\Omega))$ for $1<p<\frac{1}{1-\alpha_0}$. Hence it is possible to apply Lemma \ref{lem-igbp} to have
\begin{equation*}
\begin{aligned}
\int_{\Omega_T}\ {^c\partial_t^{\alpha_0}}u \  \phi\  dx dt &=\int_{\Omega_T} J_{0+}^{1-\alpha_0} {\p_t u}\cdot \phi \ dx dt = \int_{\Omega_T} {\p_t u} \ J_{T-}^{1-\alpha_0}\phi \ dx dt \\
&={\int_{\Omega}  \big[u \,(J_{T-}^{1-\alpha_0}\phi)\big]|_{t=0}^{t=T}\; dx}-\int_{\Omega_T}  u \, ({\partial_t} J_{T-}^{1-\alpha_0}\phi) \ dx dt \\
&=\int_{\Omega_T}  u \, D_{T-}^{\alpha_0} \phi \ dx dt,
\end{aligned}
\end{equation*}
where we used $u(x,0)=0$ and $J_{T-}^{1-\alpha_0}\phi(x,T) = 0$. Next, combing the zero boundary conditions, we have from Green's formula that
\begin{eqnarray*}
\int_{\Omega_T} (- \Delta u + \tilde g' * u) \ \phi\ dxdt
=\int_{\Omega_T} (- \Delta \phi + \tilde g' \star \phi)\ u \ dxdt.
\end{eqnarray*}

Combing all the above results with the first equation in \eqref{prob-adjoint}, we complete the proof of this lemma.
\end{proof}

Let $\beta\in W^{1,1}(0,T)$, $\beta \not\equiv 0$ in $[0,T]$ and
$$\mathcal{W}:=\{\omega: \; \omega \in W^{1,\infty}(0,T;L^\infty(\Omega_0)), \,\omega\neq 0\}.$$ Define a bilinear functional with respect to $f\in L^2(\Omega)$ and $\omega\in\mathcal{W}$ in terms of $\beta$ by
\begin{eqnarray}\label{def-B}
\mathcal{B}(f,\omega):=\int_{\Omega_T}\beta(t)f(x)\phi[\omega](x,t) \, d x d t,
\end{eqnarray}
where $\phi[\omega](x,t)$ defined by \eqref{prob-adjoint} is a linear functional of $\omega\in \mathcal{W}$. It is easy to see that $\mathcal{B}(f,\omega)$ in (\ref{def-B}) is well defined for $f\in L^2(\Omega)$ and $\omega\in \mathcal{W}$. Define
\begin{eqnarray}\label{sun4-2}
\|{f}\|_\mathcal{B}:=\frac{1}{\|{\beta}\|_\infty}\mathop {\sup}\limits_{\omega\in\mathcal{W} }\frac{|\mathcal{B}(f,\omega)|}{\|{\omega}\|_{L^\infty(\Omega_0\times(0,T))}}.
\end{eqnarray}
As in \cite{sun20,Sun2020IP}, we can prove that $\|\cdot\|_\mathcal{B}$ is a norm on $L^2(\Omega)$ and $\|{f}\|_\mathcal{B} \leq C\|{f}\|_{L^2(\Omega)}$. The key for this is to prove that $\|f\|_{\mathcal B}=0$ leads to $f=0$, and this can be proven by the variational identity (\ref{var-identity}) together with the uniqueness of the inverse problem. Then we can establish the conditional stability of Lipschitz type for the inverse problem using the weaker norm $\|\cdot\|_\mathcal{B}$.

\begin{theorem}\label{thm-lip-B}
For the direct problem \eqref{eq-isp} with $\beta\in W^{1,1}(0,T)$ and $\beta \not\equiv 0$ in $[0,T]$, denote by $u[f_i](x,t)$ the solution to \eqref{eq-isp} corresponding to $f_i\in L^2(\Omega)$ for $i=1,2$. Then it follows that
\begin{eqnarray*}
\|{f_1-f_2}\|_{\mathcal{B}}\leq C\|{ u[f_1]-u[f_2]}\|_{L^1(\Omega_0\times(0,T))}.
\end{eqnarray*}
\end{theorem}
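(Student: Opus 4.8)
The plan is to exploit the variational identity \eqref{var-identity} from Lemma \ref{lem-var}, which provides the bridge between the interior observation data and the source term. Writing $f = f_1 - f_2$ and using linearity of the forward map, the solution difference $u[f_1] - u[f_2]$ equals $u[f]$, the solution of \eqref{eq-isp} with source $f\beta(t)$. Substituting into \eqref{var-identity} gives
\begin{eqnarray*}
\mathcal{B}(f_1-f_2,\omega)=\int_{\Omega_T}\beta(t)(f_1-f_2)(x)\phi[\omega](x,t)\,dxdt=\int_{\Omega_0\times(0,T)}(u[f_1]-u[f_2])(x,t)\,\omega(x,t)\,dxdt.
\end{eqnarray*}
This identity is the heart of the matter: it expresses the bilinear functional $\mathcal{B}$ purely in terms of the observable data $u[f_1]-u[f_2]$ on $\Omega_0\times(0,T)$, with $\omega$ acting as a test function.

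Next I would estimate the right-hand side. Since $\omega\in\mathcal{W}\subset W^{1,\infty}(0,T;L^\infty(\Omega_0))$, I can bound the data integral crudely by pulling the $L^\infty$ norm of $\omega$ out and leaving the $L^1$ norm of the data:
\begin{eqnarray*}
\left|\int_{\Omega_0\times(0,T)}(u[f_1]-u[f_2])\,\omega\,dxdt\right|\le \|\omega\|_{L^\infty(\Omega_0\times(0,T))}\,\|u[f_1]-u[f_2]\|_{L^1(\Omega_0\times(0,T))}.
\end{eqnarray*}
Dividing by $\|\omega\|_{L^\infty(\Omega_0\times(0,T))}$ and taking the supremum over $\omega\in\mathcal{W}$, the factor $\|\omega\|_{L^\infty}$ cancels, and after dividing by $\|\beta\|_\infty$ I recover exactly the definition \eqref{sun4-2} of $\|f_1-f_2\|_{\mathcal{B}}$ on the left, while the right-hand side no longer depends on $\omega$. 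This yields
\begin{eqnarray*}
\|f_1-f_2\|_{\mathcal{B}}=\frac{1}{\|\beta\|_\infty}\sup_{\omega\in\mathcal{W}}\frac{|\mathcal{B}(f_1-f_2,\omega)|}{\|\omega\|_{L^\infty(\Omega_0\times(0,T))}}\le\frac{1}{\|\beta\|_\infty}\,\|u[f_1]-u[f_2]\|_{L^1(\Omega_0\times(0,T))},
\end{eqnarray*}
which is the claimed inequality with $C=1/\|\beta\|_\infty$.

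The conceptual point worth checking carefully is that the supremum is genuinely controlled: the pairing with $\omega$ is linear in $\omega$, and the clean cancellation of $\|\omega\|_{L^\infty}$ is precisely what makes the weak norm $\|\cdot\|_{\mathcal{B}}$ the natural quantity for which Lipschitz stability holds without any a priori smoothness assumption on $f$. I do not expect a serious obstacle here, since the heavy lifting was already done in establishing the variational identity and in verifying (as asserted after \eqref{sun4-2}) that $\|\cdot\|_{\mathcal{B}}$ is indeed a norm on $L^2(\Omega)$; the latter relies on the uniqueness Theorem \ref{uniqueness-interior}. The only mild subtlety is ensuring that $\phi[\omega]$ is well defined and that the regularity stated for the adjoint problem \eqref{prob-adjoint} justifies all the integrals, but this was arranged by the discussion transforming the backward adjoint system into a forward one. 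Thus the proof is essentially a direct substitution followed by a Hölder-type bound and the cancellation built into the definition of the norm.
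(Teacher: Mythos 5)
Your proposal is correct and follows essentially the same route as the paper: both apply the variational identity of Lemma \ref{lem-var} to $f_1-f_2$ (using linearity of \eqref{eq-isp} in $f$), bound the resulting data integral by H\"older's inequality, and let the $\|\omega\|_{L^\infty}$ factor cancel in the supremum defining $\|\cdot\|_{\mathcal B}$. The paper's proof is just a more condensed version of the same computation; your explicit identification of the constant as $C=1/\|\beta\|_\infty$ is a harmless refinement.
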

\begin{proof}
By the Lemma \ref{lem-var} and the definition (\ref{sun4-2}), we have
\begin{equation*}
\begin{aligned}
\|{f_1-f_2}\|_{\mathcal{B}} &= \frac{1}{\|\beta\|_{\infty} }\mathop {\sup}\limits_{\omega\in {\mathcal W} }\frac{|\mathcal{B}(f_1-f_2,\omega)|}{\|{\omega}\|_{L^\infty(\Omega_0\times(0,T))}} \nonumber\\
&=\frac{1}{\|\beta\|_{\infty} }\mathop {\sup}\limits_{\omega\in {\mathcal W} }\frac{\left|\int_0^T\int_{\Omega_0}{(u[f_1]-u[f_2]) \omega(x,t) {\rm d}x {\rm d}t}\right|}{\|{\omega}\|_{L^\infty(\Omega_0\times(0,T))}} \\
&\leq C\|{ u[f_1]-u[f_2]}\|_{L^1(\Omega_0\times(0,T))}.
\end{aligned}
\end{equation*}
The proof is complete.
\end{proof}
\subsection{Conditional stability of Lipschitz type by partial boundary data}
Now we consider the conditional stability of the inverse source problem by using partial boundary observation data. For this, we introduce the adjoint system of \eqref{eq-isp} as
\begin{eqnarray}\label{prob-adjoint'}
\begin{cases}
D_{T-}^{\alpha_0}{\tilde \phi} - \Delta \tilde\phi + \widetilde g' \star \tilde\phi= 0, &(x,t)\in \Omega_T, \\
\tilde\phi(x,t)= \chi_{\Gamma}\omega(x,t), &(x,t)\in \partial\Omega\times[0,T],\\
J_{T-}^{1-\alpha_0}\tilde\phi(x,T) = 0, &x\in \Omega.
\end{cases}
\end{eqnarray}
Here, the boundary input $\chi_\Gamma$ is the characteristic function of $\Gamma$ and $w\in W^{2,1}(0,T;L^2(\partial\Omega))$. We next consider the wellposedness of the above adjoint system with a non-homogeneous boundary condition. For this, we construct $\Phi$ satisfies the following elliptic problem
\begin{align*}
\begin{cases}
 -\Delta \Phi = 0 &\text{ in } \Omega_T, \\
\Phi(x,t) = \chi_{\Gamma}\omega(x,t) &\text{ on } \partial \Omega \times (0, T).
\end{cases}
\end{align*}
From the theories of elliptic equations, we can see that $\Phi\in W^{2,1} \left(0,T;H^2(\Omega) \right)$, and moreover, we can see that $W= \tilde\phi-\Phi$ satisfies
\begin{align*}
\begin{cases}
D_{T-}^{\alpha_0} W - \Delta  W = -D_{T-}^{\alpha_0} \Phi  - \widetilde g'\star \Phi  &\text{ in } \Omega_T, \\
J_{T-}^{1-\alpha_0}W(x,T) = 0, & \text{ in } \Omega,\\
W(x,t) = 0 &\text{ on } \partial \Omega \times (0, T).
\end{cases}
\end{align*}
The right-hand side is in the space $W^{1,1}(0,T;L^2(\Omega))$, then the adjoint system admits a weak solution $W$ that satisfies $W^{1,p}(0,T;L^2(\Omega))\cap L^p(0,T;H^2(\Omega)\cap H_0^1(\Omega))$ for $1<p<\frac{1}{1-\alpha_0}$. Consequently, we see $\tilde\phi\in W^{1,p}(0,T;L^2(\Omega))\cap L^p(0,T;H^2(\Omega)\cap H_0^1(\Omega))$.

\begin{lemma}\label{var-indentity'}
Let $u[f]$ be the solution to the problem \eqref{eq-isp}. Then we have
\begin{align*}
\int_0^T \int_\Gamma  \left( \partial_{\nu} u \right) \omega \,\mathrm{d}x \mathrm{d}t = \int_{\Omega_T}   f(x) g(t) \tilde\phi[\omega](x,t) dxdt,
\end{align*}
where $w\in W^{2,1}(0,T;L^2(\partial\Omega))$ and $\chi_\Gamma$ is the characteristic function of the partial boundary $\Gamma\subset\partial\Omega$, and $\tilde\phi[\omega]$ is the solution to the problem (\ref{prob-adjoint'}).
\end{lemma}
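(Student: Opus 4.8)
The plan is to test the governing equation in \eqref{eq-isp} against the adjoint solution $\tilde\phi:=\tilde\phi[\omega]$ and integrate over $\Omega_T$, in direct analogy with the interior identity of Lemma \ref{lem-var}. The structural novelty is that the adjoint problem \eqref{prob-adjoint'} carries the nonhomogeneous boundary datum $\chi_\Gamma\omega$ rather than an interior source, so the flux integral $\int_0^T\int_\Gamma(\partial_\nu u)\,\omega$ will now be produced by the boundary term of Green's identity instead of by the right-hand side of the adjoint equation. Concretely, I would begin from
\[
\int_{\Omega_T}\big(\,^c\partial_t^{\alpha_0}u-\Delta u+\tilde g'*u\,\big)\tilde\phi\,\mathrm{d}x\,\mathrm{d}t=\int_{\Omega_T}f(x)\beta(t)\,\tilde\phi\,\mathrm{d}x\,\mathrm{d}t,
\]
and transpose each of the three operators on the left onto $\tilde\phi$ so that the adjoint equation can be invoked to annihilate the interior contribution.

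For the fractional-in-time term I would write $^c\partial_t^{\alpha_0}u=J_{0+}^{1-\alpha_0}\partial_t u$ and apply the fractional integration-by-parts Lemma \ref{lem-igbp} to move $J_{0+}^{1-\alpha_0}$ onto $\tilde\phi$ as $J_{T-}^{1-\alpha_0}$; an ordinary integration by parts in $t$, together with the data $u(\cdot,0)=0$ and $J_{T-}^{1-\alpha_0}\tilde\phi(\cdot,T)=0$, removes both endpoint terms and leaves $\int_{\Omega_T}u\,D_{T-}^{\alpha_0}\tilde\phi$. For the memory term I would use Fubini on the simplex $0\le\tau\le t\le T$ to rewrite $\int_0^T(\tilde g'*u)\,\tilde\phi\,\mathrm{d}t=\int_0^T u\,(\tilde g'\star\tilde\phi)\,\mathrm{d}t$, so that the forward convolution transposes precisely into the backward convolution $\star$ appearing in \eqref{prob-adjoint'}. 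For the elliptic term I would apply Green's second identity: since $u=0$ on $\partial\Omega$ while $\tilde\phi=\chi_\Gamma\omega$ there, the $u\,\partial_\nu\tilde\phi$ contribution drops and the only surviving boundary term reduces to the flux integral over $\Gamma$, while the interior part becomes $-\int_{\Omega_T}u\,\Delta\tilde\phi$.

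Collecting the three contributions, the left-hand side splits into the interior integral $\int_{\Omega_T}u\,(D_{T-}^{\alpha_0}\tilde\phi-\Delta\tilde\phi+\tilde g'\star\tilde\phi)$ plus the boundary flux integral over $\Gamma\times(0,T)$. The interior integrand vanishes identically by the adjoint equation in \eqref{prob-adjoint'}, and equating what remains with the source integral $\int_{\Omega_T}f(x)\beta(t)\tilde\phi$ yields the asserted identity.

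I expect the main care to lie not in the algebra but in justifying each transposition at the available regularity and in the bookkeeping of the boundary terms. On the one hand, the flux $\partial_\nu u|_\Gamma$ must be a well-defined element of $L^1(0,T;L^2(\Gamma))$, which follows from the $L^p(0,T;H^2(\Omega))$ bound on $u$ in Lemma \ref{wellposedness-ivp} and the trace theorem, and Lemma \ref{lem-igbp} must be applied with a conjugate pair satisfying $1/p+1/q\le 1+\alpha_0$, using that $\tilde\phi$ inherits the temporal integrability of $u$ through the decomposition $\tilde\phi=W+\Phi$ with $\Phi\in W^{2,1}(0,T;H^2(\Omega))$ recorded above. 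On the other hand, one must track the orientation of the normal $\nu$ and verify that, after the adjoint equation is used, precisely the flux integral over $\Gamma$ (with the sign matching the statement) survives, with no stray interior remainder.
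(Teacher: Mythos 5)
Your proposal is correct and is essentially the argument the paper has in mind: the paper omits the proof, remarking only that it is ``extremely similar to Lemma \ref{lem-var} using integration by parts and Lemma \ref{lem-igbp}'', and that is exactly what you carry out, including the right structural observation that the flux integral now comes from the boundary term of Green's identity (since $\tilde\phi=\chi_\Gamma\omega$ on $\partial\Omega$ while $u=0$ there) rather than from the adjoint source.

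One caveat, located precisely at the sign check you postpone to the end: with the standard outward normal and $u=0$ on $\partial\Omega$, Green's identity gives
\begin{align*}
\int_{\Omega_T}(-\Delta u)\,\tilde\phi\,\mathrm{d}x\,\mathrm{d}t
=\int_{\Omega_T}(-\Delta\tilde\phi)\,u\,\mathrm{d}x\,\mathrm{d}t
-\int_0^T\!\!\int_\Gamma(\partial_\nu u)\,\omega\,\mathrm{d}S\,\mathrm{d}t,
\end{align*}
so after the adjoint equation in \eqref{prob-adjoint'} annihilates the interior contribution, what survives is
\begin{align*}
\int_0^T\!\!\int_\Gamma(\partial_\nu u)\,\omega\,\mathrm{d}S\,\mathrm{d}t
=-\int_{\Omega_T}f(x)\,\beta(t)\,\tilde\phi[\omega](x,t)\,\mathrm{d}x\,\mathrm{d}t,
\end{align*}
i.e.\ the identity holds with a minus sign (equivalently, the boundary datum in \eqref{prob-adjoint'} should be $-\chi_\Gamma\omega$). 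This is a slip in the paper's statement rather than in your method --- the statement also writes $g(t)$ where $\beta(t)$ is meant --- and it is immaterial for Theorem \ref{thm-lip-D}, where only absolute values enter; but your assertion that the surviving flux term carries ``the sign matching the statement'' is the one step of your plan that, when actually executed, does not come out as claimed.
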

The proof is extremely similar to Lemma \ref{lem-var} using integration by parts and Lemma \ref{lem-igbp}, and we omit the proof. 

Now by using a similar treatment and setting in the above subsection, we let $\beta\in W^{1,1}(0,T)$, $\beta \not\equiv 0$ in $[0,T]$ and define
$$
\mathcal{M}:=\{\omega: \; \omega \in W^{2,\infty}(0,T;L^\infty(\Gamma)), \; \omega\neq0 \}.
$$
Define a bilinear functional with respect to $f\in L^2(\Omega)$ and $\omega\in\mathcal{M}$ in terms of $\beta$ by
\begin{eqnarray*}
{\mathcal{D}}(f,\omega):=\int_{\Omega_T}\beta(t)f(x)\tilde\phi[\omega](x,t) \, d x d t,
\end{eqnarray*}
where $\tilde\phi[\omega](x,t)$ defined by \eqref{prob-adjoint} is a linear functional of $\omega\in \mathcal{M}$. It is easy to see that $\mathcal{D}(f,\omega)$ is well defined for $f\in L^2(\Omega)$ and $\omega\in \mathcal{M}$. Define
\begin{eqnarray*}
\|{f}\|_{\mathcal{D}}:=\frac{1}{\|{\beta}\|_\infty}\mathop {\sup}\limits_{\omega\in\mathcal{M}}\frac{|\mathcal{D}(f,\omega)|}{\|{\omega}\|_{L^\infty(\Gamma\times(0,T))}}.
\end{eqnarray*}
Similar to Theorem \ref{thm-lip-B}, from the uniqueness of the inverse source problem in Theorem \ref{thm-unique'} and Lemma \ref{var-indentity'}, we can establish the conditional stability of Lipschitz type for the inverse problem under the norm $\|\cdot\|_\mathcal{D}$.

\begin{theorem}\label{thm-lip-D}
For the direct problem \eqref{eq-isp} with $\beta\in W^{1,1}(0,T)$ and $\beta \not\equiv 0$ in $[0,T]$, denote by $u[f_i](x,t)$ the solution to \eqref{eq-isp} corresponding to $f_i\in L^2(\Omega)$ for $i=1,2$. Then it follows that
\begin{eqnarray*}
\|{f_1-f_2}\|_{\mathcal{D}}\leq C\|\partial_\nu u[f_1]- \partial_\nu u[f_2]\|_{L^1(\Gamma\times(0,T))}.
\end{eqnarray*}
\end{theorem}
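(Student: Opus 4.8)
The plan is to mirror the proof of Theorem \ref{thm-lip-B} almost verbatim, replacing the interior variational identity of Lemma \ref{lem-var} by its boundary counterpart in Lemma \ref{var-indentity'} and the interior test class $\mathcal W$ by the boundary class $\mathcal M$. The only structural inputs are the linearity of the forward map $f\mapsto u[f]$, the boundary variational identity connecting the normal flux on $\Gamma$ to the bilinear form $\mathcal D$, and a single application of H\"older's inequality on $\Gamma\times(0,T)$.

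First I would use that problem \eqref{eq-isp} is linear in the source, so that the difference $u[f_1]-u[f_2]$ is precisely the solution $u[f_1-f_2]$ associated with $f:=f_1-f_2$; in particular $\partial_\nu u[f_1]-\partial_\nu u[f_2]=\partial_\nu u[f_1-f_2]$ on $\Gamma$. Next, for each admissible $\omega\in\mathcal M$ I would insert the boundary variational identity of Lemma \ref{var-indentity'} (with temporal profile $\beta$, consistent with the definition of $\mathcal D$) into the definition of $\mathcal D$, obtaining
$$\mathcal{D}(f_1-f_2,\omega)=\int_{\Omega_T}\beta(t)\,(f_1-f_2)(x)\,\tilde\phi[\omega](x,t)\,dx\,dt=\int_0^T\!\!\int_\Gamma \partial_\nu\big(u[f_1]-u[f_2]\big)\,\omega\,dx\,dt.$$
Substituting this into the definition of $\|\cdot\|_{\mathcal D}$ and applying H\"older's inequality to extract $\|\omega\|_{L^\infty(\Gamma\times(0,T))}$ then yields
$$\|f_1-f_2\|_{\mathcal{D}}=\frac{1}{\|\beta\|_\infty}\sup_{\omega\in\mathcal M}\frac{\left|\int_0^T\!\!\int_\Gamma \partial_\nu(u[f_1]-u[f_2])\,\omega\,dx\,dt\right|}{\|\omega\|_{L^\infty(\Gamma\times(0,T))}}\le \frac{1}{\|\beta\|_\infty}\,\|\partial_\nu u[f_1]-\partial_\nu u[f_2]\|_{L^1(\Gamma\times(0,T))},$$
which is the claimed estimate with $C=1/\|\beta\|_\infty$, finite and positive since $\beta\not\equiv0$.

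The genuine content does not lie in this final chain of inequalities, which is elementary once the framework is in place, but in the two preparatory facts assembled above: (i) the well-posedness of the adjoint problem \eqref{prob-adjoint'} with the nonhomogeneous boundary datum $\chi_\Gamma\omega$, established by lifting the boundary condition through the harmonic extension $\Phi$ and treating $W=\tilde\phi-\Phi$ as a forward source problem; and (ii) the boundary integration-by-parts identity of Lemma \ref{var-indentity'}, which rests on the fractional integration by parts of Lemma \ref{lem-igbp} together with Green's formula. Thus the principal obstacle is effectively discharged beforehand, and the remaining work is purely formal. I would, however, verify in passing that $\|\cdot\|_{\mathcal D}$ is a bona fide norm on $L^2(\Omega)$ so that the left-hand side is meaningful; the only nontrivial point, nondegeneracy $\|f\|_{\mathcal D}=0\Rightarrow f=0$, follows from the variational identity of Lemma \ref{var-indentity'} combined with the boundary uniqueness result of Theorem \ref{thm-unique'}.
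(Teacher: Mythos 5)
Your proposal is correct and matches the paper's intended argument: the paper proves Theorem \ref{thm-lip-D} exactly by transplanting the proof of Theorem \ref{thm-lip-B}, replacing Lemma \ref{lem-var} with the boundary identity of Lemma \ref{var-indentity'} (whose ``$g(t)$'' is indeed the temporal profile $\beta$), the class $\mathcal W$ with $\mathcal M$, and invoking Theorem \ref{thm-unique'} for the nondegeneracy of $\|\cdot\|_{\mathcal D}$, followed by the same H\"older step yielding $C=1/\|\beta\|_\infty$. No discrepancies to report.
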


\section{\large Numerical inversion}\label{sec4}
In this section, we will investigate the numerical inversion of the unknown source from the partial interior observation. Denote by $g$ the exact measurement data. Let $g^\delta$ be the noisy data of $g$, satisfying $\|g^\delta-g\|_2 \leq \delta$. Then, the concerned inverse problems are to solve the following operator equation
\begin{equation}\label{operator-G}
\mathbb{G}f\approx g^\delta,
\end{equation}
where the observation operator $\mathbb{G}: L^2(\Omega)\to L^2(\Omega_0\times(0,T))$ is defined by $\mathbb{G}f:=u[f]|_{\Omega_0\times(0,T)}$. For the noisy data $g^\delta$, we describe it as $g^\delta = g(1+\delta\zeta)$, where $\delta\ge0$ is the noise level and $\zeta$ is a random standard Gaussian noise. Furthermore, due to the ill-posedness of inverse problems, we will use iterative regularization algorithms to solve stably (\ref{operator-G}). 

Let $f_n$ be the iterative source function in the $n$-th iteration step. We set the initial guess as $f_0=0$ and denote by
$f_{\rm true}$ and $f_{\rm inv}$ the true solution and the recovered source function, respectively.  For the numerical examples, we set $\alpha(t)=0.5+0.25t$. Suppose $T=1$ and $\Omega:=[0,1]^2$, we solve the forward problem \eqref{eq-isp} and the adjoint problems using the finite element method and give a uniform mesh with the size of the mesh time $0.01$. If not specified, we set the interior observation area $\Omega_0=[0.3,0.7]^2$.


We first do the numerical inversion for smooth sources. Although we already establish the stability of our inverse problem by the norm $\|\cdot\|_{\mathcal B}$ in Theorem \ref{thm-lip-B}, such a kind of norm as the penalty term is not convenient for numerical implementations. So here we still take the Tikhonov regularization with $L^2$ norm penalty to solve the approximate solution of \eqref{operator-G}, i.e., minimizing the following regularization functional
\begin{equation}\label{L^2-term}
\mathcal{J}(f):=\frac{1}{2}\|\mathbb{G}f-g^\delta\|_2^2+\frac{\epsilon}{2}\|f\|_2^2.
\end{equation}
It is easy to see that $\nabla {\mathcal J}(f)=(\mathbb{G}^*\mathbb{G}+\epsilon I)f-\mathbb{G}^* g^\delta$, where $\mathbb{G}^*$ is the adjoint operator of $\mathbb{G}$ and is defined by $\mathbb{G}^*\omega:=\int_0^T \beta(t)\phi[\omega](x,t)\,dt$. Here $\phi[\omega]$ is the solution to the adjoint problem \eqref{prob-adjoint}. Then the function $f^*\in L^2(\Omega)$ is a minimizer of the functional $\mathcal{J}(f)$ only if it satisfies the variational equation $\mathbb{G}^*(\mathbb{G}f^*-g^\delta)+\epsilon f^*=0$. Hence, we can apply the iterative thresholding algorithm for the reconstruction. The iteration process is given by
$$
f_{n+1}=\frac{1}{A+\epsilon}(Af_n-\mathbb{G}^*(\mathbb{G}f_n-g^\delta))
$$
and stops if $\|f_{n+1}-f_n\|_2/\|f_n\|_2 \leq \varrho$, where $A=30$ is a tuning parameter and $\varrho=2\times 10^{-4}$ is a tolerance parameter.

\begin{example}\label{ex1}
Consider the numerical inversion for following cases:
\begin{itemize}
\item[(1a):]  the smooth solution is given by
$$
f_{\rm true}(x,y)=1+\sin(\pi x)\sin(\pi y), \; (x,y)\in [0,1]^2;
$$
\item[(1b):] the smooth solution is given by
$$
f_{\rm true}(x,y)=1+\sin(\pi x)\sin(\pi y)+\sin(2\pi x)\sin(2\pi y), \; (x,y)\in [0,1]^2.
$$
\end{itemize}
\end{example}


\begin{figure}[H]
\begin{center}
\includegraphics[width=.8\textwidth,height=8cm]{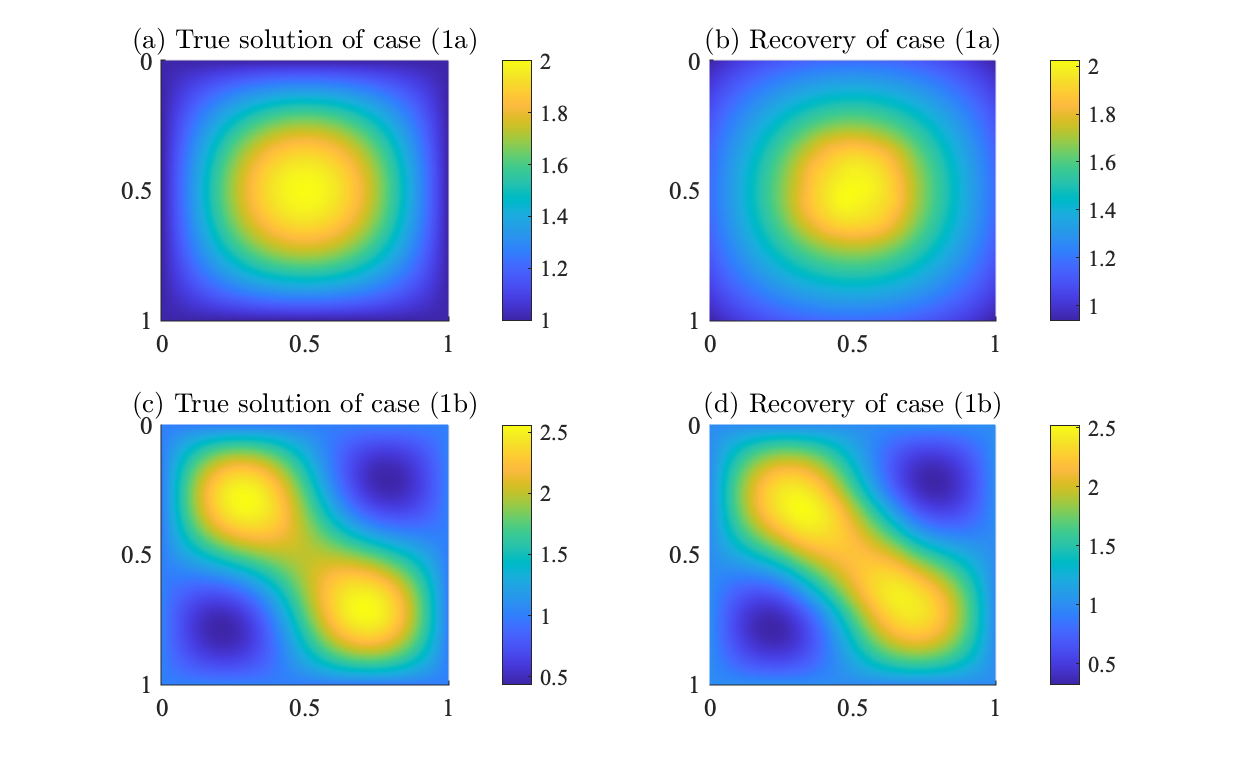}
\caption{The exact and reconstructed sources from interior observation for Example \ref{ex1}.}\label{fig-ex1}
\end{center}
\end{figure}

Set the initial guesses $f_0=1$ for case (1a) and $f_0=1+0.75\sin(2\pi x)\sin(2\pi y)$ for case (1b), respectively. Fix the noise level $\delta=5\%$. Figure \ref{fig-ex1} shows the recovered solutions of cases (1a) and (1b),  respectively. We see that smooth sources can be well reconstructed by solving the Tikhonov regularization functional \eqref{L^2-term}. Furthermore, the results in Figure \ref{fig-ex1} indicate that the uniqueness theorem \ref{uniqueness-interior} can be applied in practical experiments, namely, the partial interior observation are able to support the satisfactory recoveries.


Next, we investigate the numerical inversion for the nonsmooth sources. However, the classical $L^2$ penalty term in Tikhonov regularization has the tendency to over-smooth solutions, which makes it difficult to capture special features of the sought solution such as sparsity and discontinuous. To overcome this drawback, we will employ a regularization scheme in a manner of incorporating general non-smooth convex penalty terms, which has been applied in \cite{JinWang, Zhong}. In the follows, we give a sketch for solving \eqref{operator-G}, which essentially consists of considering
\begin{equation*}
\min_{f\in L^2(\Omega)} \mathcal{\widetilde J}(f), \quad \min_{f\in L^2(\Omega)} \mathcal{R}(f)
\end{equation*}
simultaneously in some balanced way, where the date-match functional $\mathcal{J}(f)$ is given by 
\begin{equation*}
\mathcal{\widetilde J}(f):=\frac{1}{2}\|\mathbb{G}(f)-g^\delta\|_2^2,
\end{equation*}
and $\mathcal{R}$ is the $L^2$ combined with TV penalty term given by
\begin{equation*}
\mathcal{R}(f)=\frac{1}{2\kappa}\|f\|_2^2 +|f|_{\rm TV}.
\end{equation*}  
Pick $f_{-1}=f_0:=f_0\in L^2(\Omega)$ and $\xi_{-1}=\xi_0:=\xi_0\in \partial \mathcal{R}(f_0)$ as initial guesses, where $\partial\mathcal{R}(f):=\big\{\xi\in L^2(\Omega): \mathcal{R}(\tilde f)-\mathcal{R}(f)- (\xi,\tilde f-f) \ge 0 \ \text{for all} \ \tilde f\in L^2(\Omega)\big\}$. For $n\ge 0$, we define 
\begin{equation}\label{TPG-algorithm}
\begin{cases}
\begin{aligned}
\eta_n&=\xi_n+\lambda_n(\xi_n-\xi_{n-1}),\\
z_n&=\arg \min_{z\in L^2(\Omega)} \big\{\mathcal{R}(z)-(\eta_n, z)\big\},\\
\xi_{n+1}&=\xi_n+\gamma_n \mathbb{G}^*(g^\delta-\mathbb{G}(z_n)),\\
f_{n+1}&=\arg \min_{f\in L^2(\Omega)} \big\{\mathcal{R}(f)-(\xi_{n+1},f)\big\},\\
\end{aligned}
\end{cases}
\end{equation}
where $\lambda_n\ge 0$ is the combination parameter, $\gamma_n$ is the step sizes defined by
\begin{equation*}
\gamma_n=
\begin{cases}
\min\left\{\frac{\bar\gamma_0\|\mathbb{G}(z_n^\delta)-g^\delta\|_2^2}{\|\mathbb{G}^*(\mathbb{G}(z_n^\delta)-g^\delta) \|_2^2}, \bar\gamma_1\right\} &\text{if} \ \|\mathbb{G}(z_n^\delta)-g^\delta\|_2>\tau\delta\\
0 &\text{if} \ \|\mathbb{G}(z_n^\delta)-g^\delta\|_2 \leq\tau\delta
\end{cases}
\end{equation*}
for some positive constants $\bar\gamma_0$ and $\bar\gamma_1$.  The combination parameter $\lambda_n\ge 0$ satisfies $\lambda_0=0, \ \lambda_n\in [0,1]$ for all $n\in \mathbb{N}$ and is chosen by $n/(n+5)$. Further, the iteration \eqref{TPG-algorithm} will be stopped by the discrepancy principle with respect to $z_n$ i.e., for a given $\tau>1$, we will terminate the iteration after $n_*$ steps, where $n_*:=n(\delta,g^\delta)$ is the integer such that
\begin{equation}\label{stop-rule}
\|\mathbb{G}(z_{n_*})-g^\delta\|_2 \leq \tau\delta <\|\mathbb{G}(z_{n_*-1})-g^\delta\|_2, \quad 0\leq n<n_*.
\end{equation}
Now we are ready to show the performance of the above iterative algorithm to recover the unknown sources. The primal dual hybrid gradient method will be applied to solve \eqref{TPG-algorithm} iteratively. We set $\tau=1.05$ in (\ref{stop-rule}) to stop the iteration.

\begin{example}\label{ex2}
Consider the numerical inversion for following cases:
\begin{itemize}
\item[(2a):]  the non-smooth solution is given by
$$
f_{\rm true}(x,y)=\begin{cases}
2, & (x-0.5)^2+(y-0.5)^2\leq 0.25^2,\\
0, &\text{\rm else}; 
\end{cases}
$$
\item[(2b):] the non-smooth solution is given by
$$
f_{\rm true}(x,y)=\begin{cases}
3, & (x-0.3)^2+(y-0.3)^2\leq 0.15^2,\\
3, & (x-0.7)^2+(y-0.7)^2\leq 0.15^2,\\
0, &\text{\rm else}.
\end{cases}
$$
\end{itemize}
\end{example}

Set the initial guess $f_0=0$ and the noise level $\delta=1\%$. We plot the reconstructions of cases (2a) and (2b) in Figure \ref{fig-ex2}, respectively. We see that the reconstructed source can detect the shapes and positions of the inclusions. However, the recovered source suffers from oversmoothing near the inclusion edges, which leads to the interface of inclusion not being captured well. Moreover, we can see from Figure \ref{fig-ex2}, (d) that the side of inclusions far away from the interior observation area appears to be difficult to accurately recover.

\begin{figure}[H]
\begin{center}
\includegraphics[width=.8\textwidth,height=8cm]{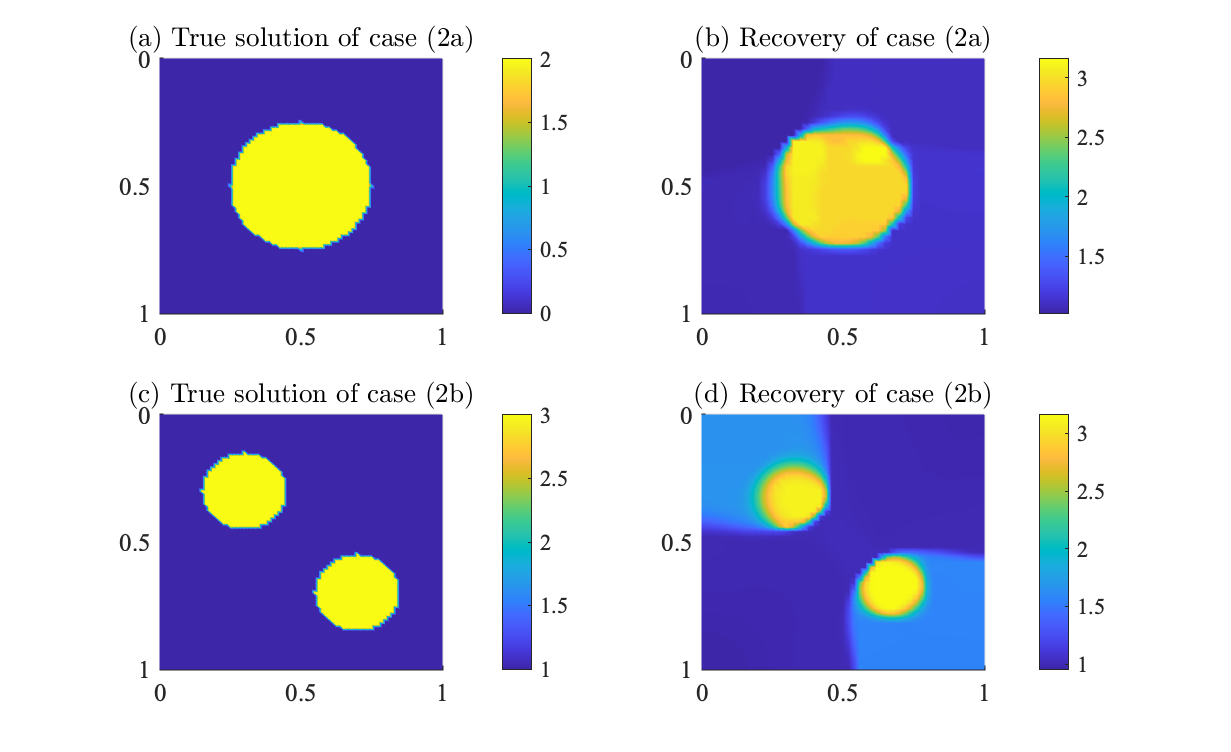}
\caption{The exact and reconstructed sources from interior observation for Example \ref{ex2}.}\label{fig-ex2}
\end{center}
\end{figure}


\section*{Acknowledgments}
This work was partially supported by the National Natural Science Foundation of China (No. 12301555, No. 12271277, No. 12201298), the National Key R\&D Program of China (No. 2023YFA1008903), the Natural Science Foundation of Jiangsu Province (No. BK20210269), the Taishan Scholars Program of Shandong Province (No. tsqn202306083), and the Open Research Fund of Key Laboratory of Nonlinear Analysis \& Applications (Central China Normal University), Ministry of Education, China. The first author also thanks Ningbo Youth Leading Talent Project (No.2024QL045).






\bibliographystyle{plainurl} 

\end{document}